\documentclass[12pt,a4paper]{article}
\usepackage[dvipsnames,svgnames,table]{xcolor}
\usepackage{sectsty}
\usepackage{multirow}
\usepackage{authblk}
\usepackage{relsize}
\usepackage[T1]{fontenc} % Use 8-bit encoding that has 256 glyphs
\usepackage{fourier} % Use the Adobe Utopia font for the document - comment this line to return to the LaTeX default
\usepackage[english]{babel} % English language/hyphenation
\usepackage{amsmath,amsfonts,amsthm, amssymb} % Math packages
\usepackage{graphicx}
\usepackage{booktabs}
\usepackage[font=small]{subcaption}
\usepackage[font=small]{caption}
\usepackage{svg}
\usepackage{wrapfig}
\usepackage{bm}
\usepackage{soul} % Underline
\usepackage{lipsum} % Used for inserting dummy 'Lorem ipsum' text into the template
\usepackage{sectsty} % Allows customizing section commands
\usepackage{fancyhdr} % Custom headers and footers
\usepackage[T1]{fontenc} % Use 8-bit encoding that has 256 glyphs
\usepackage[english]{babel} % English language/hyphenation
\usepackage{framed}
\usepackage{bm}
\usepackage{fancyhdr}
\usepackage{etoolbox}
\usepackage{titlesec}
\usepackage{titletoc}
\usepackage{float}
\usepackage{xcolor}
\theoremstyle{plain}% default
\newcommand{\be}{\begin{equation}}
\newcommand{\ee}{\end{equation}}
\newcommand{\ba}{\begin{equation*}}
\newcommand{\ea}{\end{equation*}}
\newcommand{\beq}{\begin{eqnarray}}
\newcommand{\eeq}{\end{eqnarray}}
\newcommand{\beqs}{\begin{eqnarray*}}
\newcommand{\eeqs}{\end{eqnarray*}}
\newcommand{\rfb}[1]{\mbox{\rm
   (\ref{#1})}\ifx\undefined\stillediting\else:\fbox{$#1$}\fi}
\newtheorem{lemma}{Lemma}[section]
\newtheorem{remark}{Remark}[section]
\newtheorem{definition}{Definition}[section]

\newtheorem{proposition}{Proposition}[section]
\newtheorem{theorem}{Theorem}[section]

\newcommand{\cc}{\Sigma_d}
\newcommand{\ti}{\Sigma_i}
\newcommand{\T}{\mathtt T}

\def\CC{\rm \hbox{C\kern-.56em\raise.4ex
         \hbox{$\scriptscriptstyle |$}\kern+0.5 em }}
         
\title{Energy decay and weak observability for some evolution systems}

\author{
Ka\"is AMMARI\footnote{LR Analyse et Contr\^ole des EDP, LR 22ES03, Universit\'e de Monastir, Facult\'e des Sciences  de Monastir, D\'epartement de Math\'ematiques, 5019, Monastir, Tunisia; Email: kais.ammari@fsm.rnu.tn.} 
\; and\; 
 Faouzi TRIKI\footnote{Faouzi Triki,  Laboratoire Jean Kuntzmann,  UMR CNRS 5224, 
Universit\'e  Grenoble-Alpes, 700 Avenue Centrale,
38401 Saint-Martin-d'H\`eres, France; Email: Faouzi.Triki@univ-grenoble-alpes.fr.}}

\date{\today}

\begin{document}
\maketitle

\begin{abstract}
In an abstract functional framework, we investigate the equivalence between two fundamental properties: weak observability and energy decay associated with certain classes of dissipative operators. This analysis is motivated by the study of evolution equations and their long-time behavior. Our main result establishes that under appropriate assumptions, these two notions are not only closely related but in fact equivalent. The proof relies on recent advances in the theory of weak observability, particularly those presented in \cite{AT1}, where new techniques have been developed to quantify the weak observability inequalities. These tools allow us to bridge the gap between qualitative decay properties and spectral estimates, offering a unified perspective on stability and control in abstract dynamical systems. 
\end{abstract}
 
{\sl {\bf Subjclass[2020]:} 35B35, 35B40, 37L15.}\\
{\sl {\bf Keywords:} Weak stability, weak observability, Hautus test, Wave damped equation.}

%\tableofcontents

\section{Introduction} 
An important and longstanding problem in the theory of time-dependent partial differential equations (PDEs) is the analysis of the long-time behavior of their solutions. The main objective is to determine whether these solutions converge to an equilibrium state as time tends to infinity, and if so, to identify the optimal rate of convergence. This question is fundamental in understanding the stability and asymptotic properties of various physical, biological, and engineering systems modeled by evolution equations.

\medskip

In recent years, remarkable progress has been made in this direction through the development of new operator-theoretic approaches. In particular, techniques based on the theory of strongly continuous semi-groups have proven to be highly effective in studying convergence and decay rates. These methods enable a deeper understanding of the interplay between the spectral properties of the governing operators and the dynamical behavior of the solutions. Significant contributions along these lines can be found in \cite{AT, AN, BT, H, Ba, BH, BZ, BD, CPSST}, where various classes of dissipative and non-selfadjoint PDE systems are rigorously analyzed using these advanced tools. 

\medskip

Let $X$ be a complex separable Hilbert space with norm and inner product 
denoted respectively by $\|\cdot\|_{X}$ and  $\langle \cdot,\cdot \rangle_X$.
Let $A : D(A) \subset X\rightarrow X$ be a linear unbounded self-adjoint, 
positive operator with a compact resolvent. Denote by 
${ D}(A^{\frac{1}{2}})$ the domain of 
$A^{\frac{1}{2}}$, and  introduce  for $\beta \in \mathbb R$ the scale of Hilbert
spaces $X_{\beta}$, as follows: for every
$\beta \geq 0$, $X_{\beta}= {D}(A^{\frac{\beta}{2}})$, with the norm
$\|z \|_\beta=\|A^{\frac{\beta}{2}} z\|_X$ (note that 
$0 \notin \sigma(A) $ where $\sigma(A)$
is the spectrum of $A$). The space $X_{-\beta}$ is
defined by duality with respect to the pivot space $X$ as
follows: $X_{-\beta} =X_{\beta}^*$ for $\beta>0$. The operator $A$ can be extended (or restricted) to each $X_\beta$,
such that it becomes a bounded operator
\be
\label{A0extb}
A : X_\beta \rightarrow X_{\beta-2}\;\;\; \forall \beta \in \mathbb R.
\ee

\medskip

Further, let $Y$ be a complex Hilbert space (which will be identified to its dual space) with 
norm and inner product respectively denoted by $||.||_{Y}$ and $\langle \cdot,\cdot \rangle_{Y}$, and let 
$C \in {\mathcal L}(X, Y)$, where  ${\mathcal L}(X, Y)$ the space of linear bounded operators from $X$ into $Y$.

\medskip

We next introduce damped and undamped evolution systems.  
For $(z_0, z_1) \in X_1\times X $, we consider  the following  damped evolution system: 
\be
\label{maineq}
\left\{
\begin{array}{lll}
z^{\prime\prime}(t)+ Az(t) +C^*C z^\prime(t)  = 0, \, t > 0,  \\
z(0) = z_0 \in X_1, \; z^\prime(0) = z_1 \in X,
\end{array}
\right.
\ee
where $C^* \in {\mathcal L}(Y, X)$ is the adjoint of $C \in \mathcal{L}(X,Y)$ (here $Y$ is identified with its dual 
space). The well-posdeness of the Cauchy problem \eqref{maineq}   can be obtained by standard semigroup 
methods \cite{TW09}. 

%%%%%%%%%%%%%%%%%%%%%%%%
\begin{proposition} 
The following results hold.\\
\noindent{\bf (i)} The Cauchy problem \eqref{maineq} has a unique  solution $z \in C([0, +\infty); X_1)\cap C^1([0, +\infty); X)$. \\
\noindent{\bf(ii)} Let $T>0$ be fixed. Then $Cz$ lies in $H^1(0,T; Y)$, and verifies 
\beq
\int_{0}^{T}\|Cz^\prime (t)\|_Y^{2}dt \leq  \kappa_T \|(z_{0}, z_1) \|_{X_1\times X }^2,
\eeq
where the constant $\kappa_T > 0$ is independent of $(z_{0}, z_1) \in X_1\times X$. \\
\noindent{\bf (iii)} $z$ satisfies the energy decay estimate: 
\be
\label{esten}
E(t) - E(0) = -\int_0^t \|Cz^\prime (\tau)\|_Y^2 d\tau,
\ee
where $E(t) = \frac{1}{2} \, \left(\| z(t)\|_{X_1}^2 + \| z^\prime(t)\|_{X}^2 \right).$

\end{proposition}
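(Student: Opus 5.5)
We rewrite \eqref{maineq} as a first-order system on the energy space $H = X_1\times X$, with norm $\|(u,v)\|_H^2 = \|u\|_{X_1}^2+\|v\|_X^2$. Setting $Z=(z,z')$, the system becomes $Z' = \mathcal A_d Z$, where
\[
\mathcal A_d = \begin{pmatrix} 0 & I \\ -A & -C^*C\end{pmatrix}, \qquad D(\mathcal A_d) = X_2\times X_1 .
\]
We split $\mathcal A_d = \mathcal A_0 + \mathcal B$, where $\mathcal A_0 = \begin{pmatrix} 0 & I\\ -A & 0\end{pmatrix}$ is the undamped operator and $\mathcal B(u,v) = (0,-C^*Cv)$.

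The operator $\mathcal A_0$ is skew-adjoint on $H$. Indeed, $\langle \mathcal A_0 (u,v),(u,v)\rangle_H = \langle A^{1/2}v, A^{1/2}u\rangle_X - \langle Au, v\rangle_X$ is purely imaginary. Moreover $\mathcal A_0$ is invertible because $0\notin\sigma(A)$. Hence, by Stone's theorem, $\mathcal A_0$ generates a unitary group. Since $C\in\mathcal L(X,Y)$, the operator $\mathcal B$ is bounded on $H$. It is also dissipative, since $\mathrm{Re}\langle \mathcal B Z, Z\rangle_H = -\|Cv\|_Y^2\le 0$. By the bounded perturbation theorem, $\mathcal A_d$ generates a $C_0$-semigroup. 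Because $\mathcal A_d$ is dissipative and $\mathcal A_d - I$ is onto (a bounded dissipative perturbation of an m-dissipative operator is m-dissipative), Lumer--Phillips shows that this semigroup is a contraction semigroup.

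For data $(z_0,z_1)\in X_1\times X$, the mild solution gives $z\in C([0,\infty);X_1)\cap C^1([0,\infty);X)$. Here $z'$ is taken as the second component, and the identification $z'=\frac{d}{dt}z$ in $X$ is checked first for data in $D(\mathcal A_d)$ and then passed to the limit by density. Uniqueness follows from uniqueness of mild solutions. This proves \textbf{(i)}.

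For \textbf{(iii)}, we first take $Z_0\in D(\mathcal A_d)$. Then $Z\in C^1([0,\infty);H)$ and
\[
\frac{d}{dt}\tfrac12\|Z(t)\|_H^2 = \mathrm{Re}\langle \mathcal A_d Z, Z\rangle_H = -\|Cz'(t)\|_Y^2 .
\]
Integrating over $[0,t]$ gives \eqref{esten} for regular data. For general data, we approximate $Z_0$ in $H$ by elements of $D(\mathcal A_d)$. By contraction, $Z_n\to Z$ in $C([0,t];H)$, so $Cz_n'\to Cz'$ in $C([0,t];Y)$, and we may pass to the limit on both sides of the identity.

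For \textbf{(ii)}, the identity \eqref{esten} immediately gives
\[
\int_0^T\|Cz'\|_Y^2\,dt \le E(0) = \tfrac12\|(z_0,z_1)\|^2_{X_1\times X},
\]
so one may take $\kappa_T=1/2$. Membership $Cz\in H^1(0,T;Y)$ follows because $Cz\in C^1([0,T];Y)$: indeed $z\in C^1([0,T];X)$ and $C$ is bounded.

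The only delicate point is the density argument connecting strong and mild solutions, together with the identification of $z'$ as a time derivative for data that are merely in $X_1\times X$. Since $C$ is bounded, this obstacle is mild and is handled by the contraction estimate.
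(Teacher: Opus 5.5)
Your proof is correct. It is the standard semigroup argument that the paper invokes without writing out (it only cites Tucsnak--Weiss): a skew-adjoint generator perturbed by the bounded dissipative operator $-\mathcal{C}^*\mathcal{C}$, the energy identity for data in $X_2\times X_1$, and a density/contraction argument for general data. Deriving (ii) from (iii) also gives the explicit constant $\kappa_T=\tfrac12$, independent of $T$, which is consistent with, and slightly stronger than, the paper's convention that $\kappa_T$ is increasing in $T$.
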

%%%%%%%%%%%%%
Without loss of generality we assume that $\kappa_T$ is an increasing function of $T$ (if the assumption is not satisfied we substitute  $\kappa_T$ by $\sup_{0\leq t\leq T}\kappa_t$).  
	
\medskip

For $(x_0, x_1) \in X_1\times X $, we consider  the following undamped evolution system: 
\be
\label{maineq2}
\left\{
\begin{array}{lll}
x^{\prime\prime}(t)+ Ax(t) = 0, \, t > 0,  \\
x(0) = x_0 \in X_1, \; x^\prime(0) = x_1 \in X.
\end{array}
\right.
\ee
It is well known that the system \eqref{maineq2} is well-posed \cite{TW09}.  Define 
\beq
\left\{
\begin{array}{ll}
\mathcal A  =\left( \begin{array}{cc} 0 & I \\
-A & 0
\end{array}
\right) : \mathcal{D}(\mathcal{A}):= X_2 \times X_1 \subset X_1 \times X \rightarrow X_1 \times X, \\
\mathcal{C} = ( 0 \; \; C) \in \mathcal{L}(X_1 \times X, Y), \; \mathcal{C}^* = \left( \begin{array}{llcc} 0  \\
C^*
\end{array}
\right) \in \mathcal{L}(Y,X_1 \times X), 
\end{array}
\right.
\eeq
where $I$ is the identity operator.  We next consider the following 
Hilbert space   $X_1\times X$ endowed with the following 
scalar product
\beq 
\left \langle\left( \begin{array}{ll}x_0 \\ x_1 \end{array} \right),
 \left( \begin{array}{ll} z_0 \\ z_1 \end{array}\right) \right \rangle_{X_1\times X}  =  
 \langle A x_0, z_0 \rangle_{X_{-1}, X_1} + \langle x_1, z_1\rangle_{X},
 \eeq
with $A$ here is identified with its unique extension as a bounded operator and invertible
operator from $X_1$  to $X_{-1}$, while  $\langle \cdot, \cdot \rangle_{X_{-1}, X_1} $  is 
the dual product  between $X_{-1}$ and $X_1$. 

\medskip

By denoting $U(t) = \begin{pmatrix} x(t) \\ x^\prime(t) \end{pmatrix}$, 
the system (\ref{maineq2}) can be rewritten as following:

\be
\label{maineq2syst}
\left\{
\begin{array}{lll}
U^{\prime}(t) = \mathcal{A} U(t), \, t > 0,  \\
U(0) = U_0 := \begin{pmatrix} x_0 \\ x_1 \end{pmatrix}.
\end{array}
\right.
\ee

Simple calculation gives, for $(x_0,x_1), (z_0,z_1) \in 
X_2 \times X_1$,
\beq 
\left \langle \mathcal A \left( \begin{array}{ll}x_0 \\ x_1 \end{array} \right),
 \left( \begin{array}{ll}z_0 \\ z_1 \end{array}\right) \right \rangle_{X_1\times X}  =  
 \langle A x_1, z_0 \rangle_{X_{-1}, X_1} -\langle A x_0, z_1 \rangle_{X}\\
 =   \langle x_1,  A z_0 \rangle_{X_{-1}, X_1}  -\langle A x_0,  z_1 \rangle_{X} = - \left
 \langle \left( \begin{array}{ll}x_0 \\ x_1 \end{array} \right), \mathcal A
  \left( \begin{array}{ll}z_0 \\ z_1 \end{array}\right)  \right  \rangle_{X_1\times X},  
\eeq
which implies that $\mathcal A : \mathcal{D}(\mathcal{A}) := X_2 \times X_1 \subset X_1\times X \to X_1\times X$  
is a linear skew-adjoint, with a compact resolvent. We deduce from  Stone's Theorem that 
 $\mathcal A$ generates a strongly continuous unitary group  on 
$X_1\times X$ denoted by  $(e^{t\mathcal A })_{t\in \mathbb R}$.
Then the  system \eqref{maineq2}  has a unique 
solution $x \in  C([0, +\infty); X_1)\cap C^1([0, +\infty); X)$ satisfying 
\beq
 \left( \begin{array}{ll}x(t) \\ x^\prime(t) \end{array} \right) = e^{t \mathcal A }
 \left( \begin{array}{ll}x_0 \\ x_1 \end{array} \right). 
\eeq

Since  $\mathcal A$ is  a skew-adjoint operator with a compact resolvent, it follows that 
the spectrum of $\mathcal A$ is given by  
$\sigma(\mathcal A) \,=\,  \{ i \mu_k, \;  k\in \mathbb Z^* \}$  where $\mu_k \in \mathbb R^* $ is a sequence of 
 increasing  real numbers. In addition, we have $\mu_k= \sqrt{\lambda_k}$ where
 $\lambda_k$ are the eigenvalues of $A$ associated to the eigenfunctions $\varphi_k$. We denote    $(\phi_k)_{k\in
\mathbb Z^*}$  the  orthonormal sequence of eigenvectors of $\mathcal A$ associated to the eigenvalues 
$(i\mu_k)_{k\in
\mathbb Z^*}$.  

%%%%%%%%%%%%%%%%%%
\begin{remark}
Simple calculation shows that $\mu_{-k} = -\mu_k $  for $k \in \mathbb N^*$. In addition 
$\phi_k = \frac{1}{\sqrt 2}\left( \begin{array}{ll} \frac{1}{i\mu_k}\varphi_k \\ \varphi_k \end{array} \right) $
for $k\in \mathbb N^*$ and  $\phi_{-k} =  \frac{1}{\sqrt 2}\left( \begin{array}{ll} \frac{1}{i\mu_k}\varphi_k \\ -\varphi_k \end{array} \right) $ for $k\in \mathbb N^*$, where $(\lambda_k=\mu_k^2, \varphi_k)_{k \in \mathbb N^*}$ are the eigenelements  of 
the operator $A$ (see for instance Proposition 3.7.7 
in \cite{TW09}) satisfying $\langle \varphi_k,  \varphi_l \rangle_{X} = \delta_{kl}$.  
\end{remark}
%%%%%%%%%%%%%%%%%

Using the spectral decomposition of \(A\), we write
\[
\mathcal{A} =\mathcal A_+ - \mathcal A_-,
\]
where
\[
\mathcal A_\pm = \sum_{k=1}^{\infty} \mu_{|k|} \phi_{\pm k}.
\]
We then define the unbounded operator
\[
|\mathcal A| := \mathcal A_+ + \mathcal A_-.
\]
%%%%%%%%%%%%%%%%%%%%%%%%%%%%%%
\begin{definition}
Let $ (x_0, x_1)\in X_2\times X_1 \setminus\{0\}\subset X_1\times X \longmapsto \mu(x_0, x_1)\in \mathbb R^*$ be the $|\mathcal A|$-frequency 
function defined by
\begin{eqnarray}  \label{frequency}
\mu(x_0, x_1) &=& \left \langle - i|\mathcal A| \left( \begin{array}{ll}x_0 \\ x_1 \end{array} \right),
 \left( \begin{array}{ll}x_0 \\ x_1 \end{array}\right) \right \rangle_{X_1\times X}  \left\| \left( \begin{array}{ll}x_0 \\ x_1 \end{array} \right)\right\|_{X_1\times X}^{-2} \\
&=& \sum_{k\in \mathbb Z^*} \mu_{|k|} \left|\left \langle \left( \begin{array}{ll}x_0 \\ x_1 \end{array} \right),
 \phi_k \right \rangle_{X_1\times X} \right|^2
\left( \sum_{k\in \mathbb Z^*} \left|\left \langle \left( \begin{array}{ll}x_0 \\ x_1 \end{array} \right),
 \phi_k \right \rangle_{X_1\times X} \right|^2 \right)^{-1}.
\end{eqnarray} 
\end{definition}
%%%%%%%%%%%%%%%%%%%%%%%%%%%%%%

We observe that $(x_0, x_1) \longmapsto \mu(x_0, x_1)$ is continuous on $X_2\times X_1 \setminus\{0\}$, and $\mu(\phi_k) = \mu_{|k|}, \, k \in \mathbb Z^*$.

\medskip

Let $\cc$ (resp. $\ti$) be the set of functions $\Psi : \mathbb{R} \longrightarrow \mathbb{R}_{+}^{*}$ continuous, even and non-increasing (resp. non-decreasing). Recall that if $\Psi$ $\in$ $\Sigma_d$ is not bounded below by a positive constant it satisfies $\lim_{\lambda \to \pm \infty} \Psi(\lambda) = 0$. Similarly if $\mathtt T  \in \Sigma_i$ is not bounded by a positive constant it verifies $\lim_{\lambda \to \pm \infty} \mathtt T(\lambda) = +\infty$.

\medskip

\begin{definition}
\label{wwbis} 
The system (\ref{maineq2})
is said to be weakly observable  if there
exists $\psi \in \cc $ and $\mathtt T \in \ti$ such that 
  following observation inequality holds:
\begin{eqnarray} 
\label{wobs}
\psi(\mu(x_0, x_1))  \left\| \left( \begin{array}{ll}x_0 \\ x_1 \end{array} \right)\right\|_{X_1\times X}^{2}\leq 
\int_{0}^{\mathtt T(\mu(x_0, x_1))} \left \|\mathcal C e^{t \mathcal A}\left( \begin{array}{ll}x_0 \\ x_1 \end{array} \right) \right\|_{Y}^2 dt
=\int_{0}^{\mathtt T(\mu(x_0, x_1))} \left \| Cx^\prime (t)\right\|_{Y}^2 dt,
 \end{eqnarray}
 for all $ (x_0, x_1)\in X_2\times X_1$. 
If  $\psi(t)$ is lower bounded, and $\mathtt T$ is upper bounded  the system is said  to be exactly observable.
\end{definition}

 \medskip
 
 We further denote 
 
 \[
 w(t) = \left( \begin{array}{ll}z(t) \\ z^\prime(t) \end{array} \right), \textrm{  and   } 
 w(0) = w_0 = \left( \begin{array}{ll}z_0 \\ z_1 \end{array} \right). 
 \]
Consequently the damped evolution equation \eqref{maineq}  can be rewritten as a first order 
system

\be
\label{maineq3}
\left\{
\begin{array}{lll}
w^{\prime}(t)= (\mathcal A- \mathcal{C}^* \mathcal C) w(t), \, t > 0,  \\
w(0) = w_0 \in X_1\times X. 
\end{array}
\right.
\ee
 
Let $\left(T(t)\right)_{t\geq 0}$ be a bounded $C_0$ semi-group on $X_1\times X$, with generator 
$\mathcal A_d := \mathcal A-\mathcal{C}^*\mathcal C$.  Suppose that $i\mathbb R$ is contained in the resolvent set 
$\rho(\mathcal A_d)$ of $\mathcal A_d$.
 Then it is well known that  (see for instance \cite[pp. 40,41]{battybis}, \cite{arentbatty}): 
\begin{eqnarray} \label{stabsemiunif}
\| T(t) \mathcal A_d^{-1}\| \to 0,  \textrm{  as  } t \to +\infty.
\end{eqnarray}

For the sake of completeness, we provide in the appendix, see Theorem \ref{thmstab}, an elegant proof due to Batty \cite{battybis}, based on an argument originally developed by Korevaar \cite{korevaar}.

\medskip

The aim of this paper is to characterize the non-uniform decay properties of the damped problem \eqref{maineq} via weak observability inequalities for the conservative problem \eqref{maineq2}. More precisely, our objective  is to study  the relation between 
the  rate of  decay of $\| T(t) \mathcal A_d^{-1}\| $ 
and the weak observability \eqref{wobs}  in terms of   the  operator $C$ and the 
spectral properties of the operator $A$. This problem is of importance in many  applications 
 related to the evolution of dynamic systems. The decay rate of the semigroup provide some hints on how fast 
  these systems will reach their equilibrium. Many works already related  the behavior of $\| T(t)  (\mathcal A-\mathcal{C}^*\mathcal C)^{-1}\| $  for large
   time  to the growth of the resolvent $R(i\mu, \mathcal A_d) = (i\mu I -\mathcal A_d)^{-1}$ on the imaginary axis \cite{ AT, Ba, BT, BH, BZ,lyubich, BD}. \\
 
 Let $\Sigma_d^0$ be the set of functions $\mathcal{K} : \mathbb{R_+} \longrightarrow \mathbb{R}_{+}^{*}$ continuous, decreasing and satisfying 
 $\lim_{t \to +\infty} \mathcal K (t) = 0$.
\begin{proposition}
\label{ww} 
There exists $\mathcal K \in \Sigma_d^0$ such that the following inequality holds:
\begin{eqnarray} 
\label{stability}
\| T(t)  w_0\|_{X_1\times X}^2  \leq  \mathcal K(t) g(\mu(w_0)) \|w_0\|_{X_1\times X}^2,\qquad \forall w_0 \in X_2\times X_1\setminus \left\{0\right\},
\end{eqnarray}
 where $g(s)=1$ for uniform stability, and $g(s)=s$ for non-uniform stability.
\end{proposition}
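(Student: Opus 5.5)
The natural route is to derive \eqref{stability} from the decay property \eqref{stabsemiunif}, which holds under the standing assumptions that $(T(t))_{t\ge0}$ is bounded and $i\mathbb R\subset\rho(\mathcal A_d)$. I would put $\mathcal K_0(t):=\sup_{s\ge t}\|T(s)\mathcal A_d^{-1}\|^2$. By \eqref{stabsemiunif} this is finite, non-increasing, and tends to $0$. It is not necessarily continuous or strictly decreasing, so I would replace it by a continuous, strictly decreasing majorant $\mathcal K\in\Sigma_d^0$. One concrete choice is
\[
\mathcal K(t)=\frac1t\int_t^{2t}\mathcal K_0(s-t/2)\,ds+e^{-t}, \qquad t>0,
\]
suitably adjusted near $t=0$. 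Averaging a monotone function in this way gives continuity, and adding $e^{-t}$ makes the majorant strictly decreasing while preserving $\mathcal K\ge\mathcal K_0$. Both of these are routine.

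The core estimate writes $w_0=\mathcal A_d^{-1}\mathcal A_d w_0$ for $w_0\in X_2\times X_1=\mathcal D(\mathcal A)=\mathcal D(\mathcal A_d)$; the domains agree because $\mathcal C^*\mathcal C$ is bounded. This gives
\[
\|T(t)w_0\|^2\le \|T(t)\mathcal A_d^{-1}\|^2\,\|\mathcal A_d w_0\|^2\le \mathcal K(t)\,\|\mathcal A_dw_0\|^2 .
\]
It then remains to bound $\|\mathcal A_d w_0\|^2$ by a constant multiple of $\mu(w_0)\|w_0\|^2$, with the constant absorbed into $\mathcal K$.

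That bound is the step I expect to be the main obstacle. From $\|\mathcal A_d w_0\|\le \|\mathcal A w_0\|+\|\mathcal C^*\mathcal C\|\|w_0\|$, and from the spectral formula $\|\mathcal A w_0\|^2=\sum_k\mu_{|k|}^2|\langle w_0,\phi_k\rangle|^2$, I only get the quadratic quantity $\sum_k\mu_{|k|}^2|\langle w_0,\phi_k\rangle|^2$. The definition of $\mu(w_0)$ involves the first power $\mu_{|k|}$, not its square.

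I see two ways to handle this mismatch. The first is to read $\mu(w_0)$ through $|\mathcal A|$, so that $\|\mathcal A w_0\|^2=\langle|\mathcal A|^2w_0,w_0\rangle$. I would then use the interpolation $\|T(t)\mathcal A_d^{-1/2}\|\to0$, obtained from \eqref{stabsemiunif} and the uniform boundedness of the semigroup via the moment inequality for fractional powers of the sectorial operator $-\mathcal A_d$. That gives $\|T(t)w_0\|^2\le \mathcal K(t)\|(-\mathcal A_d)^{1/2}w_0\|^2$. Next I would compare $\|(-\mathcal A_d)^{1/2}w_0\|^2$ with $\langle|\mathcal A|w_0,w_0\rangle+c\|w_0\|^2$ by a perturbation argument, since $\mathcal A_d$ is a bounded perturbation of the skew-adjoint operator $\mathcal A$ and their half-power domains coincide. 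This yields exactly the factor $\mu(w_0)\|w_0\|^2$, up to the additive term $c\|w_0\|^2\le (c/\mu_1)\mu(w_0)\|w_0\|^2$, which holds because $\mu(w_0)\ge\mu_1>0$.

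If the fractional-power comparison proves delicate, the fallback is to take $g(s)=s^2$ in the direct argument above. For the uniform case $g\equiv1$ the statement is immediate once exponential stability is assumed, since $\|T(t)\|\to0$ and one can set $\mathcal K(t)=\sup_{s\ge t}\|T(s)\|^2$ regularised as before.
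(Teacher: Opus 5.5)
Your main route is correct. It does not reproduce the paper's proof so much as repair it. The paper's Case~2 is exactly your core estimate: it takes $\|T(t)\mathcal A_d^{-1}\|\to 0$ from Theorem~\ref{thmstab}, sets $\mathcal K(t)=\sup_{s>t}\|T(s)\mathcal A_d^{-1}\|^2$, and then passes to the factor $\mu(w_0)\|w_0\|^2_{X_1\times X}$ with a bare ``Hence''. As you observed, that only controls $\|\mathcal A_d w_0\|^2$, which is comparable to $\sum_k \mu_{|k|}^2|\langle w_0,\phi_k\rangle|^2$. So the paper's argument by itself proves your fallback $g(s)=s^2$, not $g(s)=s$. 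The same identification of $\|w_0\|^2_{X_2\times X_1}$ with $\mu(w_0)\|w_0\|^2_{X_1\times X}$ reappears at the end of the proof of Theorem~\ref{mainstab}(ii).

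Your interpolation step is precisely the missing ingredient. The moment inequality for the invertible m-accretive operator $-\mathcal A_d$ gives $\|T(t)(-\mathcal A_d)^{-1/2}\|^2\le CM\|T(t)\mathcal A_d^{-1}\|$. Then $D((-\mathcal A_d)^{1/2})=D(|\mathcal A|^{1/2})$ with equivalent norms turns this into the bound by $\langle|\mathcal A|w_0,w_0\rangle=\mu(w_0)\|w_0\|^2$. The paper's one-line estimate is cheaper but only reaches $g(s)=s^2$; your route needs fractional powers but actually proves the statement. Note that it produces $\mathcal K(t)=C\sup_{s\ge t}\|T(s)\mathcal A_d^{-1}\|+e^{-t}$, with the first power rather than the square the paper writes. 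You should not expect to keep the square with $g(s)=s$, since passing from $\mathcal A_d^{-1}$ to the half power typically halves the decay rate. The proposition is purely existential, so this is harmless here.

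Three points to tighten.

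\emph{Domain equality.} The equality of half-power domains should be proved rather than attributed to a perturbation argument, because square-root domains of non-self-adjoint operators are delicate (Kato--McIntosh). It does hold here, for example from Balakrishnan's formula
\[
(-\mathcal A_d)^{-1/2}-(-\mathcal A)^{-1/2}=-\frac{1}{\pi}\int_0^\infty \lambda^{-1/2}(\lambda-\mathcal A_d)^{-1}\mathcal C^*\mathcal C(\lambda-\mathcal A)^{-1}\,d\lambda .
\]
Its integrand is $O(\lambda^{-1/2}(1+\lambda)^{-3/2})$ in the norm of $D(|\mathcal A|^{1/2})$, since $\||\mathcal A|^{1/2}(\lambda-\mathcal A_d)^{-1}\|\lesssim(1+\lambda)^{-1/2}$. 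The same formula with $\mathcal A$ and $\mathcal A_d$ interchanged gives $D(|\mathcal A|^{1/2})\subset D((-\mathcal A_d)^{1/2})$, which is the inclusion you actually use.

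\emph{Regularisation.} Your regularised $\mathcal K$ is not a majorant: it equals $\frac1t\int_{t/2}^{3t/2}\mathcal K_0(u)\,du$, which can be smaller than $\mathcal K_0(t)$. Average over an interval lying to the left of $t$ instead. Alternatively, note that $\|T(s+h)\mathcal A_d^{-1}-T(s)\mathcal A_d^{-1}\|\le Mh$, so $t\mapsto\sup_{s\ge t}\|T(s)\mathcal A_d^{-1}\|$ is already continuous, and adding $e^{-t}$ puts it in $\Sigma_d^0$.

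\emph{Fallback.} The case $g(s)=s^2$ proves a weaker statement than this proposition, so it cannot stand in for the half-power argument.
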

 The proof of this result is provided in the Appendix. 
   
%%%%%%%%%%%%%%%%%%
\section{Main results} \label{sec2}
We present in this section the  main results of our paper.

\begin{theorem} \label{wstab}
 We assume that the weak observability inequality (\ref{wobs}) is satisfied for some $\psi \in \cc $ and $\mathtt T \in \ti$. 
 Let $w$ be the solution to the damped 
 system with initial condition $w_0 \in X_2 \times X_1 \setminus \left\{0\right\}$, and define the normalized energy:
 \beqs
 \mathcal{H}(t) := \frac{\left\|w(t)\right\|^2_{X_1 \times X}}{\left\|w_0\right\|^2_{\mathcal{D}(\mathcal{A}_d)}},\quad \forall t>0.
 \eeqs
 Then, there exists $C >0$ such that $\mathcal H$ satisfies the  following inequality: 
\be
\mathcal{H}(t)  \leq \frac{C}{(\mathcal{G}(\mathcal{H}(t)))^2} 
\left( \mathcal{H}(t)  - \mathcal{H}\left(t + \frac{1}{\mathcal{G}(\mathcal{H}(t))} \right)
 \right), \qquad \forall \, t > 0,
\label{nunif}
\ee
where $\mathcal G: \mathbb R_+^* \to \mathbb R_+$ is the function in $\ti$ defined by $ \mathcal{G}(r) := \frac{\sqrt{\psi(1/r)}}{\T(1/r)}$,  and $\left\|w_0\right\|^2_{\mathcal{D}(\mathcal{A}_d)} := \left\|w_0\right\|^2_{X_1 \times X} + \left\|\mathcal{A}_d w_0\right\|^2_{X_1 \times X}$.
\end{theorem}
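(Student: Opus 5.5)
The plan is a standard comparison argument. On a window $[t,t+\tau]$ the damped trajectory is split into an undamped part plus a remainder; weak observability is applied to the undamped part, the remainder is controlled by the dissipation, and the window length is finally tuned to the frequency of $w(t)$.

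\textbf{Step 1 (one window).} Fix $t>0$ and write $w(t+s)=e^{s\mathcal A}w(t)+r(s)$, where
\[
r(s)=-\int_0^s e^{(s-\sigma)\mathcal A}\mathcal C^*\mathcal C w(t+\sigma)\,d\sigma
\]
by Duhamel's formula. Since $e^{s\mathcal A}$ is unitary, Cauchy--Schwarz gives
\[
\|r(s)\|^2\le s\,\|C\|^2\int_0^s\|\mathcal C w(t+\sigma)\|_Y^2\,d\sigma .
\]
Apply (\ref{wobs}) to $w(t)$, taking $\tau\ge \T(\mu(w(t)))$ and using that the integrand is nonnegative:
\[
\psi(\mu(w(t)))\|w(t)\|^2\le \int_0^{\tau}\|\mathcal C e^{s\mathcal A}w(t)\|_Y^2ds
\le 2\int_0^\tau\|\mathcal C w(t+s)\|^2ds+2\|C\|^2\int_0^\tau\|r(s)\|^2ds .
\]
This yields
\[
\psi(\mu(w(t)))\|w(t)\|^2\le 2(1+\|C\|^4\tau^2)\int_t^{t+\tau}\|\mathcal C w\|^2 .
\]
The energy identity (\ref{esten}) in first-order form says $\int_t^{t+\tau}\|\mathcal Cw\|^2=\tfrac12(\|w(t)\|^2-\|w(t+\tau)\|^2)$. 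Hence
\[
\psi(\mu(w(t)))\|w(t)\|^2\lesssim (1+\tau^2)\big(\|w(t)\|^2-\|w(t+\tau)\|^2\big).
\]

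\textbf{Step 2 (bounding the frequency by $1/\mathcal H$).} We need $\mu(w(t))\le 1/\mathcal H(t)$, possibly up to a constant. Since $\mathcal A_d$ is dissipative, $t\mapsto\|\mathcal A_d w(t)\|$ is nonincreasing, because $\mathcal A_d w$ also solves the damped system. Since $\mathcal C^*\mathcal C$ is bounded, this gives
\[
\|\mathcal A w(t)\|\le \|\mathcal A_d w_0\|+\|C\|^2\|w_0\|\lesssim \|w_0\|_{\mathcal D(\mathcal A_d)}.
\]
Next, $\mu(w)=\langle |\mathcal A|w,w\rangle/\|w\|^2\le \|\mathcal A w\|/\|w\|$, because $\||\mathcal A|w\|=\|\mathcal Aw\|$. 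Combining,
\[
\mu(w(t))\lesssim \frac{\|w_0\|_{\mathcal D(\mathcal A_d)}}{\|w(t)\|}=\mathcal H(t)^{-1/2}\le c\,\mathcal H(t)^{-1},
\]
where the last inequality uses $\mathcal H\le 1$. Any constant factor can be absorbed into $C$ after rescaling, or handled by monotonicity of $\psi$ and $\T$.

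\textbf{Step 3 (choice of $\tau$ and assembly).} Since $\psi\in\cc$ is non-increasing and $\T\in\ti$ is non-decreasing, Step 2 gives
\[
\psi(\mu(w(t)))\ge\psi(1/\mathcal H(t))=\mathcal G(\mathcal H(t))^2\,\T(1/\mathcal H(t))^2
\quad\text{and}\quad
\T(\mu(w(t)))\le \T(1/\mathcal H(t)).
\]
Choose the window length $\tau=1/\mathcal G(\mathcal H(t))=\T(1/\mathcal H)/\sqrt{\psi(1/\mathcal H)}$. This is admissible because $\psi$ can be assumed bounded by $\psi\le\|C\|^2\cdot$const; otherwise replace it by $\min(\psi,1)$. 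Hence $\tau\gtrsim\T(1/\mathcal H)\ge\T(\mu(w(t)))$. Dividing Step 1 by $\|w_0\|^2_{\mathcal D(\mathcal A_d)}$ gives
\[
\psi(1/\mathcal H)\,\mathcal H(t)\lesssim(1+\tau^2)\big(\mathcal H(t)-\mathcal H(t+\tau)\big).
\]
Since $\psi$ is bounded and $\T\ge$ const, we have $1+\tau^2\lesssim\tau^2=\T^2/\psi$. Multiplying through by $1/\psi$ then produces exactly (\ref{nunif}).

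\textbf{Main obstacle.} The delicate part is Step 2 together with the bookkeeping of constants. Matching $\mu(w(t))$ to $1/\mathcal H(t)$ is not automatic: the frequency bound is only $\mathcal H^{-1/2}$ times a constant, and the argument of $\psi$ and $\T$ cannot be rescaled freely. I would first try to absorb this using $\mathcal H\le1$ and, if necessary, to normalize constants (for example $\|C\|\le1$) by modifying $\psi$ and $\T$ within their classes.
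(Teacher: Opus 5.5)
Your Step~1 is the paper's comparison lemma written via Duhamel, and it is fine. Step~3, however, contains a real error. You apply the Step~1 estimate on the \emph{long} window $\tau=1/\mathcal G(\mathcal H(t))=\T(1/\mathcal H)/\sqrt{\psi(1/\mathcal H)}$, so the prefactor you get is
\[
\frac{1+\tau^2}{\psi(1/\mathcal H)}\sim\frac{\T^2(1/\mathcal H)}{\psi^2(1/\mathcal H)}=\frac{1}{\mathcal G(\mathcal H)^2\,\psi(1/\mathcal H)},
\]
not $\mathcal G(\mathcal H)^{-2}=\T^2/\psi$. ``Multiplying through by $1/\psi$'' therefore leaves an extra factor $1/\psi(1/\mathcal H(t))$. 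This factor is unbounded precisely in the genuinely weakly observable case.

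Because the Duhamel remainder costs $\tau^2$, the estimate must be applied on the \emph{shortest} admissible window. The paper does this. It uses the window $[t,t+\T(1/\mathcal H(t))]$, which gives the prefactor $(1+4\|\mathcal C\|^4\T^2)/\psi\le C\,\T^2/\psi=C\,\mathcal G^{-2}$, since $\T(1/\mathcal H)\ge\T(1)$. Only afterwards does it lengthen the window \emph{in the energy difference} to $1/\mathcal G(\mathcal H)$, using that $s\mapsto\|w(s)\|$ is nonincreasing. That lengthening needs $1/\mathcal G\ge\T(1/\mathcal H)$, i.e.\ $\psi(1/\mathcal H)\le1$. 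Replacing $\psi$ by $\min(\psi,1)$ is not free, because it changes $\mathcal G$; the paper assumes $\psi\le 1$ tacitly. Also, ``$\tau\gtrsim\T$'' is not enough to invoke \eqref{wobs}: you need $\tau\ge\T(\mu(w(t)))$ exactly.

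Step~2 is also left open, and the fix you suggest does not work. The constant sits inside the arguments of $\psi$ and $\T$, and monotonicity goes the wrong way: $\psi(c/\mathcal H)\le\psi(1/\mathcal H)$ for $c>1$. Moreover, an inequality of type \eqref{nunif} with the smaller function $\mathcal G(\cdot/c)$ does not imply the one with $\mathcal G$.

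The paper claims the constant-free bound $\mu(w(t))\le 1/(2\mathcal H(t))$. It argues via $\mu(w)\|w\|^2=\langle -i\mathcal Aw,w\rangle=\Im\langle w',w\rangle\le\frac12\|w'\|^2+\frac12\|w\|^2$, together with $\|w'(t)\|\le\|\mathcal A_dw_0\|$ and $\|w(t)\|\le\|w_0\|$. Be aware that the first equality replaces $|\mathcal A|$ by $\mathcal A$ and is false for the frequency as defined: for $w=(x_0,0)\neq0$ the middle term vanishes while $\mu(w)>0$. The resulting bound can genuinely fail when $\|C\|$ is large, e.g.\ for $C^*C=aI$ and an overdamped real eigenvector of $\mathcal A_d$.

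With the $|\mathcal A|$-definition, your estimate $\mu(w)\|w\|^2\le\|\mathcal Aw\|\|w\|\le\|w'\|\|w\|+\|C\|^2\|w\|^2$ is the correct one. It yields $\mu(w(t))\le(\frac12+\|C\|^2)/\mathcal H(t)$. So, once Step~3 is repaired, your argument proves \eqref{nunif} when $\|C\|^2\le\frac12$ and $\psi\le1$. In general it proves it only with $\mathcal G$ replaced by $r\mapsto\mathcal G(r/c)$, where $c=\frac12+\|C\|^2$.
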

\begin{proof}
We begin the proof of the theorem by deriving a weak observability inequality for the damped system \eqref{maineq3}, which is of independent interest.

\begin{lemma}
The solution $w$  of  the damped system \eqref{maineq3} with initial condition $w_0 \in X_2 \times X_1 \setminus \left\{0\right\}$, satisfies 
\begin{eqnarray} \label{wobsdamped}
\widetilde \psi(\mu(w_0)) \left\|w_0\right\|^2_{X_1 \times X} \leq \int_0^{\T(\mu(w_0))}  \left\|\mathcal{C}w(s)\right\|^2_{Y} \, ds,
\end{eqnarray}
where $\widetilde \psi \in \cc$ is defined by
\begin{equation*}
\widetilde \psi(\mu)= \frac{1}{2}\left(1 + 4 \T^2 (\mu ) 
\left\|\mathcal{C}\right\|^4_{\mathcal{L}(X_1 \times X,Y)}
\right)^{-1} \psi(\mu).
\end{equation*}

\end{lemma}
\begin{proof}
Let $v(t) = U(t) - w(t)$, where $U(t)$ is solution of \rfb{maineq2syst} with $U(0) = w_0$, and $w(t)$ satisfies \rfb{maineq3}. So, $v(t)$ verifies
\be
\label{maindiff}
\left\{
\begin{array}{ll}
v^\prime(t) = \mathcal{A} v(t) + \mathcal{C}^* \mathcal{C} w(t),  \, t > 0, \\
v(0) = 0.
\end{array}
\right.
\ee
Multiplying \rfb{maindiff} by $v(t)$, we get 
$$
\left\langle v^\prime(t),v(t)\right\rangle_{X_1 \times X} = \left\langle \mathcal{A} v(t),v(t)\right\rangle_{X_1 \times X} + 
\left\langle \mathcal{C} w(t), \mathcal{C} v(t)\right\rangle_Y.
$$
Which implies 
$$
\Re \left\langle v^\prime (t),v(t)\right\rangle_{X_1 \times X} =  \Re \left\langle \mathcal{C} w(t),\mathcal{C} v(t)\right\rangle_Y.
$$
Then
\begin{equation} \label{eq:keyidentity}
\frac{1}{2} \, \left\| v(t)\right\|^2_{X_1 \times X} =  \int_0^t \Re \left\langle \mathcal{C} w(s),\mathcal{C} v(s)\right\rangle_Y \, ds, \,  \forall t > 0,
\end{equation}
Hence 
$$
\left\| v(t)\right\|^2_{X_1 \times X} \leq \int_0^t \left(\frac{1}{\delta} \, \left\|\mathcal{C} w(s)\right\|^2_Y + \delta \, \left\| \mathcal{C} v(s)\right\|^2_Y \right) \, ds, \, t > 0,\, \forall \delta>0.
$$
Let $$\mathcal{M}(t) := \sup_{0 < s <t} \left\|v(s)\right\|^2_{X_1 \times X}.$$ Then
$$
\mathcal{M}(t) \leq \frac{1}{\delta} \, \int_0^t \left\|\mathcal{C} w(s)\right\|^2_Y \, ds + \delta T \left\|\mathcal{C}\right\|^2_{\mathcal{L}(X_1 \times X,Y)} \mathcal{M}(T), \, 0 < t < T,\, \delta > 0. 
$$
So, for $\delta = \frac{1}{2T \left\|\mathcal{C}\right\|^2_{\mathcal{L}(X_1 \times X,Y)}}$, we get 
$$
\mathcal{M}(T) \leq 4 T \left\|\mathcal{C}\right\|^2_{\mathcal{L}(X_1 \times X,Y)} \int_0^T \left\|\mathcal{C}w(s)\right\|^2_{Y} \, ds.
$$
Therefore
$$
\int_0^T \left\|v(s)\right\|^2 \, ds \leq T 
\mathcal{M}(T) \leq 4 T^2  \left\|\mathcal{C}\right\|^2_{\mathcal{L}(X_1 \times X,Y)} \, 
\int_0^T \left\|\mathcal{C}w(s)\right\|^2_{Y} \, ds,
$$
which gives
$$
\int_0^T \left\|\mathcal{C}U(s)\right\|^2_{Y} \, ds \leq 2 \int_0^T \left\|\mathcal{C}v(s)\right\|^2_{Y} \, ds + 
2 \int_0^T \left\|\mathcal{C}w(s)\right\|^2_{Y} \, ds 
$$
$$
\leq 2 \left( 1 + 4 T^2  \left\|\mathcal{C}\right\|^4_{\mathcal{L}(X_1 \times X,Y)}\right) \int_0^T \left\|\mathcal{C}w(s)\right\|^2_{Y} \, ds.
$$
Therefore, by the weak observability inequality \rfb{maineq2syst} satisfied by $U(t)$, we get 
$$
\psi(\mu(w_0)) \left\|w_0\right\|^2_{X_1 \times X} \leq 2 \left( 1 + 4 \T^2 (\mu(w_0))  \left\|\mathcal{C}\right\|^4_{\mathcal{L}(X_1 \times X,Y)}\ \right) \, \int_0^{\T(\mu(w_0))}  \left\|\mathcal{C}w(s)\right\|^2_{Y} \, ds,
$$
which finishes the proof of the lemma.

\end{proof}

Back to the proof of Theorem \ref{wstab}. Combining the estimates \eqref{wobsdamped} and \eqref{esten}, we get
$$
\psi(\mu(w_0)) \left\|w_0\right\|^2_{X_1 \times X} \leq 2 \left( 1 + 4 \T^2 (\mu(w_0))  \left\|\mathcal{C}\right\|^4_{\mathcal{L}(X_1 \times X,Y)}\ \right) \, \left(\left\|w_0\right\|^2_{X_1 \times X} - \left\|w(\T(\mu(w_0))\right\|^2_{X_1 \times X} \right),
$$
for all $w_0 \in X_2 \times X_1 \setminus \left\{0\right\}$. \\

Applying the previous inequality to $w(t)$ on the time interval $\left(t, t+\T(\mu(w(t)))\right)$, we obtain
$$
\psi(\mu(w(t))) \left\|w(t)\right\|^2_{X_1 \times X} \leq 
$$
\be
\label{**}
2 \left( 1 + 4 \T^2 (\mu(w(t)))  \left\|\mathcal{C}\right\|^4_{\mathcal{L}(X_1 \times X,Y)}\ \right) \, \left(\left\|w(t)\right\|^2_{X_1 \times X} - \left\|w(t+\T(\mu(w(t)))\right\|^2_{X_1 \times X} \right), \, \forall \, t > 0.
\ee
Since $\mu(w(t)) \left\|w(t)\right\|^2_{X_1 \times X} = \left\langle -i \mathcal{A} w(t),w(t)\right\rangle_{X_1 \times X}  \in \mathbb{R}$,
multiplying \rfb{maineq3} by $w(t)$, leads to
$$
\left\langle w^\prime(t),w(t) \right\rangle_{X_1 \times X} = \left\langle \mathcal{A} w(t),w(t)\right\rangle_{X_1 \times X} - \left\|\mathcal{C}w(t)\right\|^2_{Y}.
$$
Then 
$$
\left\langle -i \mathcal{A} w(t),w(t)\right\rangle_{X_1 \times X} = \Im \left\langle w^\prime(t),w(t) \right\rangle_{X_1 \times X}
$$
which gives
$$
\mu(w(t)) \left\|w(t)\right\|^2_{X_1 \times X} \leq \frac{1}{2} \left\|w^\prime(t)\right\|^2_{X_1 \times X} + \frac{1}{2} \left\|w(t)\right\|^2_{X_1 \times X}.
$$
We have that $Z(t) = w^\prime(t), w_0 \in X_2 \times X_1,$ satisfies 
$$
\left\{
\begin{array}{ll}
Z^\prime(t) = \mathcal{A}_d Z(t), \, t > 0, \\
Z(0) = w^\prime (0) = \mathcal{A}_d w_0.
\end{array}
\right.
$$
Then, 
$$\left\|Z(t)\right\|^2_{X_1 \times X} - \left\|Z(0)\right\|^2_{X_1 \times X} = -2 \int_0^t \left\|\mathcal{C}Z(s)\right\|^2_Y \, ds.$$ 
Hence, 
\be
\label{prop}
\left\|w^\prime(t)\right\|^2_{X_1 \times X} \leq  \left\|Z(0)\right\|^2_{X_1 \times X} = \left\|\mathcal{A}_d w_0\right\|^2_{X_1 \times X}.
\ee
Then \rfb{**} and \rfb{prop} imply that
$$
\left\|w(t)\right\|^2_{X_1 \times X} \leq  \frac{1 + 4 \left\|\mathcal{C}\right\|^4_{\mathcal{L}(X_1 \times X,Y)} 
\T^2 \left( 
\frac{\left\|w_0\right\|^2_{\mathcal{D}(\mathcal{A}_d)}}{\left\|w(t)\right\|^2_{X_1 \times X}} 
\right)}{\psi \left( \frac{\left\|w_0\right\|^2_{\mathcal{D}(\mathcal{A}_d)}}{\left\|w(t)\right\|^2_{X_1 \times X}} \right)} \times 
$$
$$
\left( \left\|w(t)\right\|^2_{X_1 \times X} - 
\left\|w\left(t + \T \left(\frac{\left\|w_0\right\|^2_{\mathcal{D}(\mathcal{A}_d)}}{\left\|w(t)\right\|^2_{X_1 \times X}}\right) 
\right)\right\|^2_{X_1 \times X}\right), \, \forall \, t > 0.
$$
We note that  $\mathcal{H}$ (resp. $\mathcal{G}$) is a continuous positive decreasing (resp. increasing)
real function on $]0,+\infty)$ and $\mathcal{H}$ is bounded by one.

Then, we have
$$
\mathcal{H}(t) \leq  \frac{1 + 4 \left\|\mathcal{C}\right\|^4_{\mathcal{L}(X_1 \times X,Y)} 
\T^2 \left(1/\mathcal{H}(t)\right)}{\psi \left(1/\mathcal{H}(t)\right)} \times 
\left(\mathcal{H}(t) - \mathcal{H}\left(t + \T \left(1/\mathcal{H}(t)\right)\right)\right), \, \forall \, t > 0,
$$

Therefore 

$$
\mathcal{H}(t) \leq  C \, 
\frac{1}{\left(\mathcal{G}(\mathcal{H}(t)\right))^2} \times 
\left(\mathcal{H}(t) - \mathcal{H}\left(t + \frac{1}{\mathcal{G}(\mathcal{H}(t))}\right)\right), \, \forall \, t > 0,
$$
where $C = 1/\T(1) + 4 \left\|\mathcal{C}\right\|^4_{\mathcal{L}(X_1 \times X,Y)}$, which implies the desired inequality  \rfb{nunif}.
\end{proof}
The following results constitute an improvement of the results derived in \cite[Lemma 2.1]{ABE}. 
\begin{lemma} \label{Mo2}
Let $\mathcal H: \mathbb R_+\to \mathbb R_+^*$ (resp. $\mathcal G: [0,1]\to \mathbb R_+$) be a continuous  decreasing (resp. non-decreasing) real function on $[0, +\infty)$. Let 
$\kappa$ be a positive constant verifying $ \kappa > \mathcal G^2(1)$.  Suppose that $\mathcal H$ is bounded by one and satisfies 
\be 
\mathcal{H}(t)  \leq \kappa(\mathcal{G}(\mathcal{H}(t)))^{-2} 
\left( \mathcal{H}(t)  - \mathcal{H}\left(t + \left(\mathcal{G}(\mathcal{H}(t))\right)^{-1} \right)\right), \, \forall \, t > 0.
\label{nunifbis}
\ee
\begin{itemize}
\item[(i)]  If \, $\mathcal G(r) \geq  \mathcal G_0 $ for all $r \in (0, 1)$   $\mathcal G_0$ is some positive 
constant, then
\beq \label{Gr1}
\mathcal H(t) \leq e^{\kappa^{-1}\mathcal G_0^2} e^{-\kappa^{-1}\mathcal G_0^3  t }\mathcal H(0), \quad \forall t>0.
\eeq
\item[(ii)]   If \, $\lim_{t\to 0^+ }\mathcal G(t) =0 $, then  

\beq \label{Gr12}
\mathcal H(t) \leq  \mathcal F^{-1} \left(\frac{2\kappa}{t}\right)\mathcal H(0), \quad \forall t>0,
\eeq
where $\mathcal F: \mathbb R_+ \rightarrow \mathbb R_+$ is the function defined by $\mathcal F(r) = r\mathcal G^3(r),$ and $ \mathcal F^{-1}$ is its pseudo-inverse function 
defined by $\mathcal F^{-1}(r) = \inf\{s \in \mathbb R_+; \mathcal F(s)>r\}$. 
\end{itemize}
\end{lemma}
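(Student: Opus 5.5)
The plan is to discretize time along a sequence adapted to the inequality \eqref{nunifbis} and iterate. Set $t_0=0$ and $t_{n+1}=t_n+\mathcal G(\mathcal H(t_n))^{-1}$, and write $h_n=\mathcal H(t_n)$, $g_n=\mathcal G(h_n)$. Applying \eqref{nunifbis} at $t=t_n$ gives
\[
h_{n+1}\le \Bigl(1-\kappa^{-1}g_n^2\Bigr)h_n ,
\]
where $1-\kappa^{-1}g_n^2\in(0,1)$ because $h_n\le 1$, $\mathcal G$ is non-decreasing and $\kappa>\mathcal G^2(1)$. Since $\mathcal H$ is decreasing, $h_n$ decreases. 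Hence $g_n$ is non-increasing, and the step lengths $1/g_n$ are non-decreasing. Monotonicity of $\mathcal H$ then lets us interpolate: for $t\in[t_n,t_{n+1}]$ we have $\mathcal H(t)\le h_n$. One must also check that $t_n\to\infty$, so that every $t$ is covered; otherwise $g_n\to\infty$, which is impossible since $g_n\le \mathcal G(1)$.

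For case (i), bound $g_n\ge \mathcal G_0$ and $g_n\le \mathcal G(1)$. Then $h_n\le (1-\kappa^{-1}\mathcal G_0^2)^n h_0\le e^{-n\kappa^{-1}\mathcal G_0^2}h_0$. The number of steps up to time $t$ is controlled from below by the step lengths, since $t_{n+1}\le t_n+\mathcal G_0^{-1}$. So for $t\in[t_n,t_{n+1})$ we get $n\ge \mathcal G_0 t-1$. This yields $\mathcal H(t)\le e^{\kappa^{-1}\mathcal G_0^2}e^{-\kappa^{-1}\mathcal G_0^3 t}h_0$, which is \eqref{Gr1}. The extra factor $e^{\kappa^{-1}\mathcal G_0^2}$ comes exactly from the ``$-1$'' lost in counting steps.

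For case (ii), I would not iterate multiplicatively. Instead I would use an integral (Nakao/Martinez type) comparison: compare $\mathcal H$ with the solution of an ODE $y'=-\kappa^{-1}y\,\mathcal G(y)^3$. The heuristic is that on an interval of length $1/g_n$ the decrement is $h_n-h_{n+1}\ge \kappa^{-1}g_n^2h_n = \kappa^{-1}g_n^3 h_n\cdot(t_{n+1}-t_n)$. I would avoid solving that ODE and aim directly at a statement of the form $t\,\mathcal F(\mathcal H(t))\le 2\kappa$, up to normalization by $\mathcal H(0)$. The argument goes as follows. Fix $t$ and let $N$ be such that $t\in[t_N,t_{N+1})$. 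Sum the telescoping inequality from $n=0$ to $N$. Use $h_n\ge h_{N}\ge\mathcal H(t)$ and $g_n\ge \mathcal G(\mathcal H(t))$ for $n\le N$. Since each step has length $t_{n+1}-t_n=1/g_n$, this gives
\[
\mathcal H(0)\ \ge\ \sum_{n=0}^{N}(h_n-h_{n+1})\ \ge\ \kappa^{-1}\,\mathcal H(t)\,\mathcal G(\mathcal H(t))^3\, t_{N+1}\ \ge\ \kappa^{-1}\,t\,\mathcal F(\mathcal H(t)).
\]
Hence $\mathcal F(\mathcal H(t))\le \kappa\mathcal H(0)/t$. Inverting with the pseudo-inverse then gives $\mathcal H(t)\le \mathcal F^{-1}(\kappa \mathcal H(0)/t)$.

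The main obstacle is matching the normalization in \eqref{Gr12}, where $\mathcal H(0)$ appears as a multiplicative factor outside $\mathcal F^{-1}$ and the constant is $2\kappa$. I would handle this by rescaling: apply the argument to $\widetilde{\mathcal H}=\mathcal H/\mathcal H(0)$. Alternatively, I would note that $\mathcal H(0)\le1$ and that $\mathcal F$ is increasing, with $r\mathcal G^3(r)$ superlinear-type monotonicity. Either route absorbs $\mathcal H(0)$, with the factor $2$ giving room for the endpoint loss $t-t_N$. Along the way I would make sure the pseudo-inverse inversion is legitimate when $\mathcal F$ is only non-decreasing, with possible flat parts. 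Note that $\mathcal F(s)\le r$ implies $s\le \mathcal F^{-1}(r)$ directly from the definition of $\inf\{s:\mathcal F(s)>r\}$.
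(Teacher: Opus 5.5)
Your part (i) is correct. Your telescoping sum in (ii) is also correct: it gives $t\,\mathcal F(\mathcal H(t))\le\kappa\bigl(\mathcal H(0)-\mathcal H(t_{N+1})\bigr)\le\kappa\,\mathcal H(0)$, hence $\mathcal H(t)\le\mathcal F^{-1}(\kappa\mathcal H(0)/t)$.

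\textbf{The gap} is your last paragraph. Neither of your routes turns this bound into the printed form $\mathcal F^{-1}(2\kappa/t)\,\mathcal H(0)$.

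\emph{Rescaling.} Setting $\widetilde{\mathcal H}=\mathcal H/\mathcal H(0)$ destroys \eqref{nunifbis}. Because $\mathcal G$ is evaluated at $\mathcal H(t)$, the quotient would have to lose the larger relative amount $\kappa^{-1}\mathcal G^2(\widetilde{\mathcal H}(t))$ over the shorter step $\mathcal G(\widetilde{\mathcal H}(t))^{-1}$. If you also rescale $\mathcal G$, taking $\widetilde{\mathcal G}(r)=\mathcal G(\mathcal H(0)r)$, you only recover $\mathcal F^{-1}(\kappa\mathcal H(0)/t)$.

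\emph{Monotonicity.} It runs the wrong way. For $c\in(0,1]$, monotonicity of $\mathcal G$ gives $\mathcal F(cs)=cs\,\mathcal G^3(cs)\le c\,\mathcal F(s)$. Hence $\mathcal F^{-1}(ca)\ge c\,\mathcal F^{-1}(a)$, which is the opposite of what you need.

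\emph{The target is false.} In fact no argument can close this step, because \eqref{Gr12} as printed does not hold. Take $\mathcal G(r)=r$, $\kappa=2$ and $\mathcal H(t)=(1000+3t)^{-1/3}$, so that $\mathcal H(0)=1/10$ and $\kappa>\mathcal G^2(1)=1$. With $h=\mathcal H(t)$ one computes $\mathcal H(t+1/h)=h(1+3h^2)^{-1/3}$. By concavity, $1-(1+u)^{-1/3}\ge u/6$ for $u\in[0,1]$, which gives $h-\mathcal H(t+1/h)\ge h^3/2$; this is exactly \eqref{nunifbis}. Yet at $t=1000/3$ we have $\mathcal H(t)=2000^{-1/3}\approx 0.079$, whereas $\mathcal F^{-1}(2\kappa/t)\,\mathcal H(0)=(0.012)^{1/4}/10\approx 0.033$.

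\textbf{What is actually true.} The correct conclusion of (ii) is the one you proved: $\mathcal H(t)\le\mathcal F^{-1}(\kappa\mathcal H(0)/t)\le\mathcal F^{-1}(2\kappa/t)$, with $\mathcal H(0)$ inside $\mathcal F^{-1}$ or dropped. State that as your conclusion rather than trying to force the printed normalization. This is also all the paper's proof delivers. If you keep the factor $\mathcal H(t)$ on the left through \eqref{Gre8}--\eqref{Gre10}, you get $t\,\mathcal F(\mathcal H(t))\le\kappa\mathcal H(0)+\mathcal H(t)\mathcal G^2(\mathcal H(t))\le 2\kappa$. The proof of Theorem \ref{mainstab} then discards $\mathcal H(0)$ anyway via $\mathcal H(0)\le 1$.

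\textbf{Comparison of routes.} The paper iterates multiplicatively and uses $(1-\kappa^{-1}x^2)^{t_nx}\le e^{-\kappa^{-1}t_nx^3}$, then $e^x\ge 1+x$. It then pays an endpoint loss $t-t_{n^*}<\mathcal G(\mathcal H(t))^{-1}$, which costs the second $\kappa$. Your additive sum is shorter and loses nothing at the endpoint, since $t_{N+1}>t$. For (i), your direct step count from $t_{n+1}-t_n\le\mathcal G_0^{-1}$ is essentially the paper's argument.
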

%%%%%%%%%%%%%%%%%%%%%%%%%%%%%%%%%%%%%%%%%%%%%%%%%%%%%%%%%%%%%%%%%%%%%%%%

The proof of the Lemma above is provided in the Appendix.
As a consequence of Lemma \ref{Mo2}, we have the following stability estimate.

\begin{theorem} \label{mainstab} 
We assume that the weak observability inequality (\ref{wobs}) is satisfied for some $\psi \in \cc $ and $\mathtt T \in \ti$. 
Let  $\mathcal G: \mathbb R_+ \to \mathbb R_+$ be the function  $ \mathcal{G}(x) := \frac{\sqrt{\psi(1/x)}}{\T(1/x)}$. 

\begin{itemize}
\item[(i)] If \, $\mathcal G(r) \geq  \mathcal G_0 $ for all $r \in (0, 1)$  and $\mathcal G_0$ is some positive 
constant, then
\beq \label{Gr13}
\left\|w(t)\right\|^2_{X_1 \times X}  \leq e^{\kappa^{-1}\mathcal G_0^2} e^{-\kappa^{-1}\mathcal G_0^3  t }\left\|w_0\right\|^2_{X_1\times X}, \quad \forall \, t > 0, \,  \forall w_0 \in X_2\times X_1.
\eeq

\item[(ii)]   If \,  $\lim_{t\to 0^+ }\mathcal G(t) =0 $, then  

\beq \label{Gr14}
\left\|w(t)\right\|^2_{X_1 \times X} \leq  \mathcal F^{-1} \left(\frac{2\kappa}{t}\right) \mu(w_0)\left\|w_0\right\|^2_{X_1\times X},  \quad \forall \, t > 0, \, \forall w_0 \in X_2\times X_1,
\eeq
where $\mathcal F: \mathbb R_+ \rightarrow \mathbb R_+$ is the function defined by $\mathcal F(r) = r\mathcal G^3(r),$ and $ \mathcal F^{-1}$ is its pseudo-inverse function 
defined by $\mathcal F^{-1}(r) = \inf\{s \in \mathbb R_+; \mathcal F(s)>r\}$. 
 \end{itemize}
\end{theorem}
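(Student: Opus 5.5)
The plan is to combine Theorem \ref{wstab} with Lemma \ref{Mo2}, applied to the normalized energy $\mathcal H(t)=\|w(t)\|^2_{X_1\times X}/\|w_0\|^2_{\mathcal D(\mathcal A_d)}$ for a fixed $w_0\in X_2\times X_1\setminus\{0\}$ (the case $w_0=0$ is trivial).

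\textbf{Step 1: check the hypotheses of Lemma \ref{Mo2}.}
\begin{itemize}
\item By the energy identity \eqref{esten}, written in the form $\|w(t)\|^2-\|w(s)\|^2=-2\int_s^t\|\mathcal C w\|_Y^2$, the function $\mathcal H$ is continuous and non-increasing.
\item $\mathcal H>0$: backward uniqueness, or simply assuming $w(t)\neq 0$, since otherwise the estimate is trivial from that time on.
\item $\mathcal H\le \|w_0\|^2_{X_1\times X}/\|w_0\|^2_{\mathcal D(\mathcal A_d)}\le 1$.
\item $\mathcal G(r)=\sqrt{\psi(1/r)}/\T(1/r)$ is non-decreasing in $r$, since $\psi$ is non-increasing and $\T$ is non-decreasing on $\mathbb R_+$.
\item Theorem \ref{wstab} gives \eqref{nunifbis} with $\kappa=C$. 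If $C\le \mathcal G^2(1)$, we enlarge $\kappa$ to any constant exceeding $\max(C,\mathcal G^2(1))$. Inequality \eqref{nunifbis} remains true with a larger constant because $\mathcal H(t)-\mathcal H(t+1/\mathcal G)\ge 0$.
\end{itemize}

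\textbf{Step 2: apply the two cases of Lemma \ref{Mo2}.}

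In case (i), \eqref{Gr1} gives
\[
\|w(t)\|^2\le e^{\kappa^{-1}\mathcal G_0^2}e^{-\kappa^{-1}\mathcal G_0^3t}\,\|w_0\|^2_{X_1\times X},
\]
after multiplying by $\|w_0\|^2_{\mathcal D(\mathcal A_d)}$ and using $\mathcal H(0)\|w_0\|^2_{\mathcal D(\mathcal A_d)}=\|w_0\|^2_{X_1\times X}$. This is \eqref{Gr13}, and it extends to all $w_0\in X_2\times X_1$ (or even $X_1\times X$) by density, since $T(t)$ is a contraction.

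In case (ii), \eqref{Gr12} yields
\[
\|w(t)\|^2\le \mathcal F^{-1}(2\kappa/t)\,\|w_0\|^2_{X_1\times X}.
\]

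\textbf{Step 3: recover the factor $\mu(w_0)$ in \eqref{Gr14}.}

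Here I would use the normalization. The proof of Theorem \ref{wstab} really only uses $\|w_0\|^2_{\mathcal D(\mathcal A_d)}$ through the bound $\mu(w(t))\|w(t)\|^2\lesssim \|w'(t)\|^2+\|w(t)\|^2$. So the idea is to replace the normalization by a quantity comparable to $\mu(w_0)\|w_0\|^2$, or to compare $\|w_0\|^2_{\mathcal D(\mathcal A_d)}$ with $\mu(w_0)\|w_0\|^2_{X_1\times X}$ on suitable data.

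This is where I expect the \textbf{main obstacle}. In general $\|\mathcal A w_0\|^2$ is controlled by $\sum\mu_{|k|}^2|\langle w_0,\phi_k\rangle|^2$, which is not bounded by $\mu(w_0)\|w_0\|^2$; at best it is bounded by its square-type analogue. My first attempt would be to rerun Theorem \ref{wstab} with the normalization $\langle -i|\mathcal A|w_0,w_0\rangle+\|w_0\|^2$ in place of $\|w_0\|^2_{\mathcal D(\mathcal A_d)}$. This requires showing that $\langle -i|\mathcal A|w(t),w(t)\rangle$ is non-increasing up to a constant along the damped flow, i.e., estimating the commutator term $\Re\langle |\mathcal A|\mathcal C^*\mathcal C w,w\rangle$. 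Failing that, I would state \eqref{Gr14} with $\|w_0\|^2_{\mathcal D(\mathcal A_d)}$, which is what the argument directly yields and which dominates the right-hand side for data with $\mu(w_0)\gtrsim 1$, absorbing constants into $\kappa$.
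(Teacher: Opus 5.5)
Your case (i) is the paper's argument: multiply \eqref{Gr1} by $\|w_0\|^2_{\mathcal D(\mathcal A_d)}$. Your check of the hypotheses of Lemma~\ref{Mo2} fills in details the paper skips. Case (ii), however, has a genuine gap, and it starts in your Step~2.

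The bound $\|w(t)\|^2_{X_1\times X}\le\mathcal F^{-1}(2\kappa/t)\,\|w_0\|^2_{X_1\times X}$ cannot hold in general. Its rate is independent of $w_0$ and tends to $0$, so by density it would give $\|T(t)\|\to 0$, i.e.\ exponential stability. That fails, e.g., for the damped wave equation of Section~3, which is weakly observable with $\mathcal G(0^+)=0$ but not exponentially stable. The culprit is the multiplicative factor $\mathcal H(0)$ in \eqref{Gr12}, which the proof of Lemma~\ref{Mo2} does not support. From \eqref{Gre8}--\eqref{Gre9} one only gets $t\,\mathcal H(t)\,\mathcal G^3(\mathcal H(t))\le\kappa\mathcal H(0)+\mathcal H(t)\mathcal G^2(\mathcal H(t))\le 2\kappa$, that is, $\mathcal H(t)\le\mathcal F^{-1}(2\kappa/t)$ with no $\mathcal H(0)$ in front. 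There was a warning sign: had your Step~2 been valid, \eqref{Gr14} would follow at once from $\mu(w_0)\ge\mu_1$, and Step~3 would have no obstacle.

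The paper discards $\mathcal H(0)$ using $\mathcal H(0)\le 1$. What its proof actually reaches is $\|w(t)\|^2_{X_1\times X}\le\mathcal F^{-1}(2\kappa/t)\|w_0\|^2_{X_2\times X_1}$, which is exactly your graph-norm fallback. As you note, that norm dominates $\mu(w_0)\|w_0\|^2_{X_1\times X}$ and not conversely. So neither your fallback nor the paper's proof establishes \eqref{Gr14} as stated.

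Your Plan~A is the right repair, and the ingredient you think is missing needs no commutator estimate. $T(t)$ is a contraction on $X_1\times X$, and also on $\mathcal D(\mathcal A_d)$ with the graph norm, since it commutes with $\mathcal A_d$. Because $\mathcal C^*\mathcal C$ is bounded, $\mathcal D(\mathcal A_d)=\mathcal D(\mathcal A)$ with equivalent norms. Interpolate at $\theta=1/2$; in the orthonormal basis $(\phi_k)$ these are weighted $\ell^2$ spaces, or you can use the Heinz inequality. This shows $T(t)$ is uniformly bounded on $\mathcal D(|\mathcal A|^{1/2})$, whose squared norm is $\sum_k(1+\mu_{|k|})|\langle w,\phi_k\rangle|^2=(1+\mu(w))\|w\|^2_{X_1\times X}$. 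Hence
\[
(1+\mu(w(t)))\,\|w(t)\|^2_{X_1\times X}\le M\,(1+\mu(w_0))\,\|w_0\|^2_{X_1\times X},\qquad t\ge 0,
\]
with $M\ge 1$ depending only on $\|\mathcal C\|$. This replaces the use of \eqref{prop} in bounding $\mu(w(t))$.

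Now rerun the proof of Theorem~\ref{wstab} with
\[
\widetilde{\mathcal H}(t)=\frac{\|w(t)\|^2_{X_1\times X}}{M(1+\mu(w_0))\|w_0\|^2_{X_1\times X}}.
\]
This function is non-increasing, bounded by $1$, and satisfies $\mu(w(t))\le 1/\widetilde{\mathcal H}(t)$. The corrected case (ii) of Lemma~\ref{Mo2} then gives
\[
\|w(t)\|^2_{X_1\times X}\le M(1+\mu_1^{-1})\,\mathcal F^{-1}(2\kappa/t)\,\mu(w_0)\|w_0\|^2_{X_1\times X},
\]
i.e.\ \eqref{Gr14} up to a multiplicative constant.
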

%%%%%%%%%%%%%%%%%%%
\begin{proof}
The results of the theorem are direct consequence of  Theorem \ref{wstab} and  Lemma \ref{Mo2}. 
\begin{itemize}
\item[(i)] If \, $\mathcal G(r) \geq  \mathcal G_0 $, considering the function $\mathcal H(t)$
of Theorem \ref{wstab}, and the first case in Lemma \ref{Mo2}, we obtain 
\[
\mathcal H(t) \leq e^{\kappa^{-1}\mathcal G_0^2} e^{-\kappa^{-1}\mathcal G_0^3  t }\mathcal H(0), \quad \forall t>0.
\]
Multiplying both terms in the inequality by $\left\|w_0 \right\|^2_{\mathcal{D}(\mathcal{A}_d)}$,  we get the desired result.

\item[(ii)]   If \,  $\lim_{t\to 0^+ }\mathcal G(t) =0 $,  again taking the function $\mathcal H(t)$
of Theorem \ref{wstab}, and considering the second case in Lemma \ref{Mo2}, we obtain 
\[ 
\mathcal H(t) \leq  \mathcal F^{-1} \left(\frac{2\kappa}{t}\right) \mathcal H(0),  \quad \forall \, t > 0.
\]
Since $\mathcal H(0) \leq 1$, we get 
\[
\left\|w(t)\right\|^2_{X_1 \times X} \leq  \mathcal F^{-1} \left(\frac{2\kappa}{t}\right)\left\|w_0\right\|^2_{X_2\times X_1},  \quad \forall \, t > 0, \, \forall w_0 \in X_2\times X_1,
\]
which achieves the proof of the theorem.
\end{itemize}
\end{proof}

%%%%%%%%%%%%%%%%%%%%%%%%
\begin{remark}
It is well known that the system \eqref{maineq} is exactly observable if and only if $\mathcal G(r) \geq \mathcal G_0$ for sufficiently small $r$. Therefore, the exponential energy decay obtained for the system \eqref{maineq3} is consistent with the well-established results in the literature \cite{AT}.
\end{remark}

Next we shall investigate the reciprocity of  Theorem \ref{mainstab}, i.e. uniform/non-uniform stability  implies  exact/weak observability. Let 
$\mathcal K \in \Sigma_d^0$, and assume that we have the following stability:
\beq \label{stab}
\left\|w(t)\right\|^2_{X_1 \times X} \leq  \mathcal K(t) g(\mu(w_0)) \left\|w_0\right\|^2_{X_1\times X},  \quad \forall \, t > 0, \, w_0 \in X_2\times X_1,
\eeq
with as in Proposition \ref{ww}, $g(\mu) = 1$ for uniform stability and $g(\mu) = \mu$ in case of non-uniform stability.

\begin{theorem} \label{stabobs}
We suppose that \rfb{stab} holds.

\begin{itemize}
    \item In the case of uniform stability ($g(\mu)=1$), the following exact observability inequality
    \begin{equation}
        \|w_0\|_{X_1\times X}^2
        \leq
        4 \int_0^t \|\mathcal{C}U(s)\|_Y^2\, ds,
        \qquad
        \forall\, w_0 \in X_2 \times X_1,
    \end{equation}
    is satisfied for all $t \geq T_0$, where
    \[
        T_0 = \mathcal{K}^{-1}\!\left(\frac12\right).
    \]

    \item In the case of non-uniform stability ($g(\mu)=\mu$), the following weak observability inequality
    \begin{equation} \label{obsnunif}
        \|w_0\|_{X_1\times X}^2
        \leq
        4 \int_0^t \|\mathcal{C}U(s)\|_Y^2\, ds,
    \end{equation}
    is verified for all
    \[
        t \geq \mathcal{T}(\mu(w_0)),
    \]
    where $\mathcal{T}\in\Sigma_i$ is defined by
    \[
        \mathcal{T}(\mu(w_0))
        =
        \mathcal{K}^{-1}\!\left(\frac{1}{2\mu(w_0)}\right).
    \]

\end{itemize}
Here $ \mathcal K^{-1}$ is the pseudo-inverse function 
defined by $\mathcal K ^{-1}(r) = \inf\left\{s \in \mathbb R_+; \mathcal K(s)>r\right\}$. 

\end{theorem}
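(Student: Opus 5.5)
The plan is to compare the conservative trajectory $U(t)=e^{t\mathcal A}w_0$ with the damped trajectory $w(t)=T(t)w_0$, both started from the same $w_0\in X_2\times X_1$. The energy identity \eqref{esten} gives the dissipated energy exactly. The decay assumption \rfb{stab} says that much of the energy has been lost by time $t$. The difference $v=U-w$ is then controlled by the observation of $w$, as in the Lemma in the proof of Theorem \ref{wstab}.

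\textbf{Step 1 (energy dissipation).} From \eqref{esten}, written for the first order system \eqref{maineq3}, we have
\[
\|w_0\|^2_{X_1\times X}-\|w(t)\|^2_{X_1\times X}=2\int_0^t\|\mathcal C w(s)\|_Y^2\,ds .
\]
In the uniform case, $t\ge T_0=\mathcal K^{-1}(1/2)$ gives $\mathcal K(t)\le 1/2$, since $\mathcal K$ is decreasing, so \rfb{stab} yields $\|w(t)\|^2\le\frac12\|w_0\|^2$. Hence $\frac12\|w_0\|^2\le 2\int_0^t\|\mathcal Cw\|^2\,ds$. In the non-uniform case, $t\ge \mathcal T(\mu(w_0))=\mathcal K^{-1}(1/(2\mu(w_0)))$ gives $\mathcal K(t)\mu(w_0)\le \frac12$, which is the same conclusion. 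Here I will have to handle the pseudo-inverse carefully: the infimum definition together with continuity of $\mathcal K$ should give $\mathcal K(t)\le r$ for $t\ge\mathcal K^{-1}(r)$.

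\textbf{Step 2 (from $w$ to $U$).} I need $\int_0^t\|\mathcal Cw\|^2\le \int_0^t\|\mathcal CU\|^2$, possibly up to a constant. The plan is the standard dissipative-system comparison. Write $w=U-v$, where $v'=\mathcal A_d v-\mathcal C^*\mathcal C U$ and $v(0)=0$. The energy identity for this inhomogeneous equation gives
\[
\tfrac12\|v(t)\|^2+\int_0^t\|\mathcal Cv\|^2=-\Re\int_0^t\langle \mathcal CU,\mathcal Cv\rangle .
\]
By Young's inequality this yields $\int_0^t\|\mathcal Cv\|^2\le \int_0^t\|\mathcal CU\|^2$. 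Expanding $\int\|\mathcal Cw\|^2=\int\|\mathcal CU-\mathcal Cv\|^2$ and using the same identity, I would first try to show directly that $\int_0^t\|\mathcal Cw\|^2\le\int_0^t\|\mathcal CU\|^2$. One route is to note that the identity gives $\int\|\mathcal Cw\|^2=\int\|\mathcal CU\|^2-\|v(t)\|^2-\int\|\mathcal Cv\|^2$, after checking the signs. Failing that, the cruder bound $\int\|\mathcal Cw\|^2\le 2\int\|\mathcal CU\|^2+2\int\|\mathcal Cv\|^2\le 4\int\|\mathcal CU\|^2$ suffices, provided Step 1 is combined so that the constant comes out as $4$.

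\textbf{Main obstacle.} The delicate point is getting exactly the constant $4$ in the final estimate. If the sharp identity of Step 2 holds, then $\|w_0\|^2\le 4\int_0^t\|\mathcal Cw\|^2\le4\int_0^t\|\mathcal CU\|^2$ follows directly. The sign bookkeeping in the energy identity for $v$ therefore has to be done carefully, since the damping $-\mathcal C^*\mathcal C$ acting on $v$ is precisely what makes the cross term cancel favourably. The monotonicity $\mathcal T\in\Sigma_i$ then follows from $\mathcal K^{-1}$ being non-increasing and $\mu\mapsto 1/(2\mu)$ being decreasing.
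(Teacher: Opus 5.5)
Your proof is correct and follows the paper's skeleton: the comparison $v=U-w$, the energy identity for $v$, and the use of \eqref{stab} to get $\|w(t)\|^2_{X_1\times X}\le\frac12\|w_0\|^2_{X_1\times X}$ once $\mathcal K(t)g(\mu(w_0))\le\frac12$. Your Step 2 is sharper than the paper's, and the constant $4$ actually needs it. The paper only uses $\int_0^t\|\mathcal Cw\|_Y^2\le 2\int_0^t\|\mathcal CU\|_Y^2$ in \eqref{eq:comp}. It reaches $4$ only because \eqref{eq:energy00} omits the factor $2$ that \eqref{esten} gives. Since $E=\frac12\|w\|^2_{X_1\times X}$, one has $\|w_0\|^2-\|w(t)\|^2=2\int_0^t\|\mathcal Cw\|_Y^2$, exactly as in your Step 1, and with that factor restored the paper's chain only gives $8$. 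Your identity
\[
\int_0^t\|\mathcal Cw\|_Y^2\,ds+\|v(t)\|^2_{X_1\times X}+\int_0^t\|\mathcal Cv\|_Y^2\,ds=\int_0^t\|\mathcal CU\|_Y^2\,ds
\]
is correct; it follows from the energy identity for $v$ by expanding $\|\mathcal CU-\mathcal Cv\|_Y^2$. It yields $\frac12\|w_0\|^2\le 2\int_0^t\|\mathcal Cw\|_Y^2\le 2\int_0^t\|\mathcal CU\|_Y^2$, which is the constant $4$ with correct bookkeeping. So commit to it. Your fallback $\int_0^t\|\mathcal Cw\|_Y^2\le 4\int_0^t\|\mathcal CU\|_Y^2$ only gives the constant $16$, and no recombination with Step 1 recovers $4$.

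Two smaller corrections. First, with $w=U-v$ the equation is $v'=\mathcal A_d v+\mathcal C^*\mathcal C U$, equivalently $v'=\mathcal A v+\mathcal C^*\mathcal C w$ as in \eqref{maindiff}. Hence $\frac12\|v(t)\|^2+\int_0^t\|\mathcal Cv\|_Y^2=+\Re\int_0^t\langle\mathcal CU,\mathcal Cv\rangle_Y$; the minus sign you wrote belongs to the convention $w=U+v$. Either consistent convention gives the identity above, but your displayed sign combined with $w=U-v$ would give the opposite comparison. Second, the pseudo-inverse as literally defined, $\inf\{s:\mathcal K(s)>r\}$, degenerates for decreasing $\mathcal K$: the set is an initial segment, so the infimum is $0$ or $+\infty$. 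The implication $t\ge\mathcal K^{-1}(r)\Rightarrow\mathcal K(t)\le r$, which both you and the paper use, requires reading $\mathcal K^{-1}(r)=\inf\{s\ge 0:\mathcal K(s)\le r\}$. With that reading, continuity and monotonicity of $\mathcal K$ give what you need, including $\mathcal T\in\Sigma_i$.
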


%%%%%%%%%%%%%%%%%%%%
\begin{proof}
Let $v(t) = U(t) - w(t)$, where $U(t)$ is solution of \rfb{maineq2syst} with $U(0) = w_0$, and $w(t)$ satisfies \rfb{maineq3}.
We first remark that inequality \eqref{eq:keyidentity} leads to
$$
\left\| v(t)\right\|^2_{X_1 \times X} + \int_0^t \|\mathcal{C} v(s)\|_Y^2\, ds  \leq  \int_0^t \|\mathcal{C} U(s)\|_Y^2 ds, \quad \forall t > 0.
$$
Therefore 
\begin{equation} \label{eq:compbis1}
\int_0^t \|\mathcal{C} v(s)\|_Y^2\, ds  \leq  \int_0^t \|\mathcal{C} U(s)\|_Y^2 ds, \quad \forall t > 0.
\end{equation}
Consequently 
\begin{equation} \label{eq:comp}
\int_0^t \|\mathcal{C} w(s)\|_Y^2\, ds  \leq  \int_0^t \|\mathcal{C} U(s)\|_Y^2 ds+ \int_0^t \|\mathcal{C} v(s)\|_Y^2 ds \leq 2\int_0^t \|\mathcal{C} U(s)\|_Y^2 ds,  \quad \forall t > 0.
\end{equation}
On the other hand, the equality \eqref{esten} is equivalent to 
\beq \label{eq:energy00}
\int_0^t \|\mathcal{C} w(s)\|_Y^2 = \left\|w_0\right\|^2_{X_1\times X}- \left\|w(t)\right\|^2_{X_1 \times X}, \quad  \forall t > 0.
\eeq
Combining \eqref{eq:comp} and \eqref{eq:energy00} gives 

\begin{equation} \label{eq:compbis2}
 \left\|w_0\right\|^2_{X_1\times X}- \left\|w(t)\right\|^2_{X_1 \times X} \leq  \int_0^t \|\mathcal{C} U(s)\|_Y^2 ds+ \int_0^t \|\mathcal{C} v(s)\|_Y^2 ds \leq 2\int_0^t \|\mathcal{C} U(s)\|_Y^2 ds,  \quad \forall t > 0.
\end{equation}

We  suppose that \rfb{stab} holds, then we have two cases: 

\begin{itemize}
\item 
If \, $ g(\mu)= 1$, we have 

$$
(1-\mathcal K(t))\left\|w_0\right\|^2_{X_1\times X} \leq 2\int_0^t \|\mathcal{C} U(s)\|_Y^2 ds,  \quad \forall t > 0.
$$

Then, the following observability estimate

$$
\left\|w_0\right\|^2_{X_1\times X} \leq 4 \int_0^t \|\mathcal{C} U(s)\|_Y^2 ds, \quad \forall w_0 \in X_2 \times X_1,
$$

holds for all $t\geq T_0$, with $T_0 = \mathcal K^{-1}\left(\frac{1}{2}\right)$.

\item 
If \, $ g(\mu)= \mu$, we have 

$$
(1-\mathcal K(t) \mu(w_0))\left\|w_0\right\|^2_{X_1\times X} \leq 2\int_0^t \|\mathcal{C} U(s)\|_Y^2 ds,  \quad \forall t > 0.
$$
Hence 
$$
\left\|w_0\right\|^2_{X_1\times X} \leq 4\int_0^t \|\mathcal{C} U(s)\|_Y^2 ds,  
$$
for $t\geq \T(\mu(w_0))$ where $\T(\mu(w_0)) = \mathcal K^{-1}\left(\frac{1}{2\mu(w_0)}\right)$. Since $\mathcal K$ is a decreasing, function 
$\T \in \Sigma_i$. 
\end{itemize}
\end{proof}

%%%%%%%%%%%%%%%%%%%%%%%%%%%%%%%%%%%%%%%%%%%%%%%%%%%%%%%%%%%%%%%%
\section{Application to the wave equation}

Let $\Omega = (0,1)^2$ be the unit square and $a$ the
damping coefficient, which depends only on $x \in (0,1)$ such that
$a(x) = a_0 > 0$ for $x < \sigma$ and
$a(x) = 0$ for $x > \sigma$, where $\sigma$ is some fixed number of the interval $(0,1)$. We consider the damped wave
equation:
\be
\label{wd}
\left\{
\begin{array}{ll}
u_{tt}(t,x,y) - \Delta u(t,x,y) + a(x) \, u_t(t,x,y) = 0, t \in (0,+\infty), (x,y) \in \Omega,\\
u(t, x, y) = 0,  t \in (0,+\infty), (x, y) \in \partial \Omega, \\
u(0, x, y) = u_0(x, y), u_t(0, x, y) = u_1(x, y),  (x, y) \in \Omega.
\end{array}
\right.
\ee
Here, 
$$
X = L^2(\Omega), X_1 = H^1_0(\Omega), X_2 = H^2(\Omega) \cap H^1_0(\Omega), Y = L^2(\Omega),
$$
and
$$
A = - \Delta, \,   C =  C^* = \sqrt{a} \, I \in \mathcal{L}(L^2(\Omega)).
$$
According to \cite{S}, the system \eqref{wd} is not exponentially stable but enjoys a polynomial decay rate. More precisely, there exists a constant $c_0 >0$ such that
\begin{equation} \label{estimatestabb}
\left\|(u(t),u_t(t))\right\|_{X_1\times X}^{2}
\leq
\frac{c_0}{t^{4/3}}
\left\|(u_0,u_1)\right\|_{X_2\times X_1}^{2},
\qquad \forall\, t>0.
\end{equation}

Consequently  \eqref{wd} satisfies the non-uniform stability estimate  \eqref{stab}  with 
\begin{equation*} 
g(\mu):= \mu, \quad \textrm{and} \quad\mathcal{K}(t) = \frac{c_0}{t^{4/3}}, \;\; \;\text{for} \;\;t>0.
\end{equation*}
Therefore, by Theorem~\ref{stabobs}, the above polynomial stability estimate implies a  weak observability inequality of the form \eqref{obsnunif}. More precisely,
for $(v_0,v_1) \in X_2\times X_1$, let 
$v$ be the solution to the following undamped wave equation:
\be
\label{wdbis}
\left\{
\begin{array}{ll}
v_{tt}(t,x,y) - \Delta v(t,x,y)  = 0, t \in (0,+\infty), (x,y) \in \Omega,\\
v(t, x, y) = 0,  t \in (0,+\infty), (x, y) \in \partial \Omega, \\
v(0, x, y) = v_0(x, y), v_t(0, x, y) = v_1(x, y),  (x, y) \in \Omega.
\end{array}
\right.
\ee
Then $v$ satisfied the following weak observability estimate: 

\[
\Psi(\mu(v_0,v_1))\left\|(v_0,v_1)\right\|_{X_1\times X}^{2} \leq  \int_0^T \| Cv_t(t)\|_{Y}^2 dt,\qquad \forall T\geq {\tt T}(\mu(v_0,v_1)),
\]
where the functions $\Psi \in \Sigma_d$ and ${\tt T} \in \Sigma_i$ are given by
\[
\Psi(\mu) = \frac{1}{4}, \qquad {\tt T}(\mu) = (2c_0\mu)^{\frac{3}{4}}.
\]

%Hence, the decay rate $t^{-4/3}$ of the energy is directly related to a weak observability estimate through the stability-observability correspondence established in Theorem~\ref{stabobs}.

The approach developed in this section readily extends to a wide range of settings in
which stability  estimates of the form \eqref{estimatestabb} are available.
 
%%%%%%%%%%%%%%%%%%%%%%%%

%%%%%%%%%%%%%%%%%%%%%%%%%%%%%%%%%%%%%%%%%%
\section{Appendix} \label{appendix}

%%%%%%%%%%%%%%%%%%%%%%%%%%%%%%%%%%%%%%%%%%%%%%%%%%%%%%%%%

\begin{proof}[Proof of Lemma \ref{Mo2}]
Dividing the inequality \eqref{nunifbis} both sides by $\mathcal H$, we get
\beq \label{Gre3}
\left(\mathcal H(t)\right)^{-1}\mathcal{H}\left(t + \left(\mathcal{G}(\mathcal{H}(t))\right)^{-1}\right) \leq 1-\kappa^{-1}\mathcal{G}^2(\mathcal{H}(t)).
\eeq
Define the real sequence $(t_n)_{n\in \mathbb N}$   by 
\beq \label{Gre01}
\left\{ \begin{array}{cc}
    t_{n+1} = t_n +  & \left(\mathcal{G}(\mathcal{H}(t_n))\right)^{-1},\;  n\in \mathbb N,\\
     t_0= 0.& 
\end{array}
\right.
\eeq 
Since $\mathcal H(t)>0$ for all $t\in \mathbb R_+$, the sequence  $(t_n)_{n\in \mathbb N^*}$ is increasing, positive and satisfies
$\lim_{n\to +\infty} t_n = +\infty$. In addition, we have 
$$
t_n = \sum_{j=0}^{n-1}\left(\mathcal{G}(\mathcal{H}(t_j))\right)^{-1}.
$$
We deduce from the monotony of $\mathcal G$ and $\mathcal H$ the following estimates
\beq \label{Gre4}
 n \left(\mathcal{G}(\mathcal{H}(0))\right)^{-1}\leq t_n \leq n \left(\mathcal{G}(\mathcal{H}(t_n))\right)^{-1}, \; \forall n\in \mathbb N. 
\eeq 
Back to the inequality \eqref{Gre3},we have 
\beq \label{Gre5}
\mathcal H(t_{n+1}) \leq \left( 1-\kappa^{-1}\mathcal{G}^2(\mathcal{H}(_n))\right)\mathcal H(t_n), \quad \forall n\in \mathbb N.
\eeq
Hence 
\beqs
\mathcal H(t_{n}) \leq \prod_{j=0}^{n-1} \left( 1-\kappa^{-1}\mathcal{G}^2(\mathcal{H}(t_j))\right) \mathcal H(0),  \quad \forall n\in \mathbb N^*.
\eeqs
Therefore 
\beqs
\mathcal H(t_{n}) \leq \left( 1-\kappa^{-1}\mathcal{G}^2(\mathcal{H}(t_n))\right)^{n} \mathcal H(0),  \quad \forall n\in \mathbb N^*.
\eeqs
Using now inequality \eqref{Gre4}, leads to
\beq \label{Gre6}
\mathcal H(t_{n}) \left( 1-\kappa^{-1}\mathcal{G}^2(\mathcal{H}(t_n))\right)^{-t_n \mathcal{G}(\mathcal{H}(t_n))}\mathcal H(0) \leq 1,  \quad \forall n\in \mathbb N^*.
\eeq
Forward calculation gives 
\beqs
e^{\kappa^{-1} t_n x^3}\leq (1-\kappa^{-1} x^2)^{-t_n x}, \quad \forall x\in (0, \sqrt \kappa).  
\eeqs
Combining the previous inequality with \eqref{Gre6}, we find 
\beq \label{Gre7}
\mathcal H(t_{n})e^{\kappa^{-1} t_n \mathcal{G}^3(\mathcal{H}(t_n))}  \leq \mathcal H(0),  \quad \forall n\in \mathbb N^*.
\eeq
\begin{itemize}
\item[(i)] Since $\mathcal G(H(t_n)) \geq  \mathcal G_0 $, we obtain from \eqref{Gre7} the following estimate
\beq \label{Gre8bis}
\mathcal H(t_{n}) \leq e^{- \kappa^{-1}\mathcal{G}_0^3 t_n} \mathcal H(0),  \quad \forall n\in \mathbb N^*.
\eeq
On the other hand we deduce from \eqref{Gre01} that $$t_{n+1} - t_n = \left(\mathcal{G}(\mathcal{H}(t_n))\right)^{-1} 
\leq \mathcal G_0^{-1}.$$  

Therefore for a given $t\in (0, +\infty)$ there exists a unique $n^*\in \mathbb N$ such that
$t_{n^*} \in (t-\mathcal G_0^{-1}, t]$.\\

Consequently 
\beqs
\mathcal H(t) \leq \mathcal H(t_{n^*}) \leq e^{- \kappa^{-1}\mathcal{G}_0^3 t_{n^*}}\mathcal H(0) 
\leq e^{- \kappa^{-1}\mathcal{G}_0^3(t-\mathcal G_0^{-1})}\mathcal H(0),
\eeqs
which gives the desired result.

\item[(ii)] Now we assume that  $\lim_{t\to 0^+ }\mathcal G(t) =0 $. Using the fact that
$e^{x} \geq 1+x$ for all $x \in \mathbb R_+$, we deduce from \eqref{Gre7} the following estimate 

\beq \label{Gre8}
\mathcal H(t_{n}) \kappa^{-1}t_n \mathcal{G}^3(\mathcal{H}(t_n))  \leq \mathcal H(0),  \quad \forall n\in \mathbb N^*. 
\eeq

Let $t>0$ be fixed. Since the sequence $(t_n)_{n\in \mathbb N}$ is increasing there exists a unique $n^*\in \mathbb N$ verifying $t\in [t_{n^*}, t_{n^*+1})$

Considering the monotony of the functions $\mathcal G$ and $\mathcal H$, we get
\beqs
t_{n^*+1} - t_{n^*} = \left(\mathcal{G}(\mathcal{H}(t_{n^*}))\right)^{-1} \leq \left(\mathcal{G}(\mathcal{H}(t))\right)^{-1}. 
\eeqs
Therefore  $t-  \left(\mathcal{G}(\mathcal{H}(t))\right)^{-1}>0$, and we have 
\beq  \label{Gre9}
t_{n^*} \in (t- \left(\mathcal{G}(\mathcal{H}(t))\right)^{-1}, t].
\eeq
Combining \eqref{Gre8} and \eqref{Gre9}, we obtain 

\beq \label{Gre10}
  t \mathcal{G}^3(\mathcal{H}(t))  \leq \kappa + \mathcal H(t)  \mathcal{G}^2(\mathcal{H}(t)) \leq 2\kappa.
\eeq

Hence

\beqs
\mathcal H(t)  \leq   \mathcal F^{-1} \left(\frac{2\kappa}{t}\right)\mathcal H(0),
\eeqs 
which finishes the proof.
\end{itemize}
\end{proof}

\begin{remark}
The results of Lemma \ref{Mo2} are not optimal and can be improved when $t\mathcal G^3(H(t)) $ tends 
to infinity as $t$ tends to infinity. 
\end{remark}

We provide here the proof of (\ref{stabsemiunif}).
\begin{theorem}{\cite[pp. 40,41]{battybis}}\label{thmstab}
Let $(T(t))_{t\geq 0}$ be a bounded $C_{0}$-semigroup on a Banach space $X$ with generator $A$. Assume that
\[
\sigma(A)\cap i\mathbb{R} = \varnothing,
\]
where $\sigma(A)$ denotes the spectrum of the operator $A$. Then
\[
\|T(t)A^{-1}\|\longrightarrow 0.
\qquad t\to + \infty.
\]
\end{theorem}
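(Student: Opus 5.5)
The plan is to prove Theorem \ref{thmstab} by the Korevaar-type contour argument of Batty, which is a quantitative form of the Ingham--Karamata Tauberian theorem.

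\emph{Reduction.} Let $M=\sup_{t\ge0}\|T(t)\|$ and fix $x\in X$. Put $f(t)=T(t)A^{-1}x$. The aim is to show $\|f(t)\|\to0$ uniformly for $\|x\|\le1$.

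\emph{Analytic continuation.} For $\Re\lambda>0$ the Laplace transform is
\[
\hat f(\lambda)=\int_0^\infty e^{-\lambda t}f(t)\,dt=R(\lambda,A)A^{-1}x .
\]
Because $\sigma(A)\cap i\mathbb R=\varnothing$ and $\sigma(A)$ is closed, the resolvent $R(\lambda,A)$ is holomorphic on a neighbourhood of the imaginary axis. Fix $R>0$. By compactness of $[-iR,iR]$ there is $\delta=\delta(R)>0$ such that $R(\lambda,A)$ is holomorphic and bounded by some $M_R$ on
\[
\{|\Re\lambda|\le\delta,\ |\Im\lambda|\le R\}.
\]
In particular $0\in\rho(A)$, so $A^{-1}$ is bounded.

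\emph{Truncated transform.} For $\lambda$ in that rectangle, define $g_T(\lambda)=\int_0^T e^{-\lambda t}f(t)\,dt$. This is entire, and the goal is to estimate $\|\hat f(0)-g_T(0)\|$. Integrating by parts using $f'(t)=T(t)x$ gives the explicit formula
\[
g_T(\lambda)=R(\lambda,A)\bigl(A^{-1}x-e^{-\lambda T}T(T)A^{-1}x\bigr).
\]
Hence $\hat f(0)-g_T(0)=-A^{-1}T(T)A^{-1}x$. Since $A^{-1}$ is injective, it suffices to show this quantity tends to $0$ uniformly. Alternatively, one can run Korevaar's argument directly on $f$. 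I would take the latter route: apply it to $f$, and use $\|f'\|\le M$ at the end to pass from averaged smallness to pointwise smallness.

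\emph{Korevaar contour.} Use Cauchy's formula with the kernel
\[
e^{\lambda T}\Bigl(1+\frac{\lambda^2}{R^2}\Bigr)\frac1\lambda .
\]
Take the contour to be the circle $|\lambda|=R$ on the right, and on the left the boundary of the rectangle $\{-\delta\le\Re\lambda\le0,\ |\Im\lambda|\le R\}$ (closed via the arcs). This gives
\[
\hat f(0)-g_T(0)=\frac1{2\pi i}\oint(\hat f-g_T)(\lambda)\,e^{\lambda T}\Bigl(1+\frac{\lambda^2}{R^2}\Bigr)\frac{d\lambda}{\lambda}.
\]
The contributions are estimated as follows.
\begin{itemize}
\item On the right semicircle, $\|\hat f-g_T\|\le M\|A^{-1}\|e^{-\Re\lambda T}/\Re\lambda$, and on $|\lambda|=R$ the kernel factor is $2\Re\lambda/R^2$. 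This contribution is $\le 2M\|A^{-1}\|/R$.
\item On the left, $g_T$ is entire. Its part can be moved to the full left semicircle, where the same computation gives $\le 2M\|A^{-1}\|/R$.
\item The $\hat f$ part on the left rectangle is bounded by $C(R,\delta)\|x\|$ times $\int|e^{\lambda T}|\cdots$. This tends to $0$ as $T\to\infty$ by dominated convergence, because $\Re\lambda<0$ except at the two endpoints, where $(1+\lambda^2/R^2)$ vanishes.
\end{itemize}
Therefore
\[
\limsup_T\sup_{\|x\|\le1}\Bigl\|\int_T^\infty f\Bigr\|\le 4M\|A^{-1}\|/R .
\]
Letting $R\to\infty$ gives $\sup_{\|x\|\le1}\|\int_T^\infty T(s)A^{-1}x\,ds\|\to0$.

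\emph{From integrals to $T(t)A^{-1}$.} Note that
\[
\int_T^\infty T(s)A^{-1}x\,ds=-T(T)A^{-2}x
\]
(when $T(s)A^{-1}x\to0$; in general one works with $\int_0^T$). So what has been obtained is $\|T(t)A^{-2}\|\to0$. To conclude, use the moment inequality
\[
\|T(t)A^{-1}x\|\le C\|T(t)A^{-2}x\|^{1/2}\|T(t)x\|^{1/2}\le C\|T(t)A^{-2}\|^{1/2}M^{1/2}\|x\|,
\]
which follows from the interpolation $\|A^{-1}y\|^2\le C\|A^{-2}y\|\,\|y\|$ applied to $y=T(t)x$, with $A^{-1},A^{-2}$ commuting with $T(t)$. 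This yields $\|T(t)A^{-1}\|\to0$.

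\emph{Main obstacle.} The delicate step is controlling the left-contour term uniformly in $x$ while $R\to\infty$. That term depends on $\delta(R)$ and $M_R$, so $T$ must be chosen after $R$. One also has to check carefully the vanishing of the kernel at the rectangle corners $\pm iR$.
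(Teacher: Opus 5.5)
Your proof is correct. It uses the same Korevaar--Batty contour device as the paper: the kernel $(1+z^2/R^2)/z$, the bounds $M/|\Re z|$ on the two semicircles, and dominated convergence on the left path. The difference is that you run it on $f(t)=T(t)A^{-1}x$ instead of on $T(t)$ itself.

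The paper works with the operator-valued functions $(zI-A)^{-1}T(t)$ and $e^{tz}g_t(z)=(zI-A)^{-1}(e^{tz}I-T(t))$, where $g_t(z)=\int_0^t e^{-sz}T(s)\,ds$. The residue of $(1+z^2/R^2)(zI-A)^{-1}T(t)/z$ at $0$ is $-A^{-1}T(t)$. So the contour identity produces $T(t)A^{-1}$ directly and gives $\limsup_{t\to\infty}\|T(t)A^{-1}\|\le 2M/R$ with no further step.

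Your choice of $f$ puts an extra $A^{-1}$ into the residue. The contour step therefore yields only $\|T(t)A^{-2}\|\to0$, and the interpolation inequality is genuinely needed. That inequality holds, but you should say why. For $u\in D(A^2)$, the identity $tAu=T(t)u-u-\int_0^t(t-s)T(s)A^2u\,ds$ gives $\|Au\|^2\le 2M(M+1)\|u\|\,\|A^2u\|$. Taking $u=T(t)A^{-2}x$ then gives $\|T(t)A^{-1}\|^2\le 2M^2(M+1)\|T(t)A^{-2}\|$.

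Your earlier aside that injectivity of $A^{-1}$ suffices is not a valid reason by itself: injectivity does not transfer norm convergence. What transfers the decay is this inequality, or equivalently your Lipschitz argument with $\|f'\|\le M\|x\|$.

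Your route follows the classical scalar Ingham/Newman template, and it isolates the general fact that decay of $\|T(t)A^{-2}\|$ forces decay of $\|T(t)A^{-1}\|$. The paper's route is shorter and loses nothing. If one tracks $\delta(R)$ and $M_R$ to extract rates, your argument delivers only roughly the square root of the bound the direct argument gives.
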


\begin{proof}
 The proof of Theorem \ref{thmstab}, given in \cite{battybis}, can also be derived from Ingham's Tauberian theorem \cite{ingham}. However, for the convenience of the reader, we provide here a simpler and more direct proof, given by Batty in \cite{battybis} and based on an argument due to Korevaar \cite{korevaar}. 

This method relies on suitable estimates of contour integrals and avoids some of the technicalities usually involved in Tauberian arguments. In particular, it provides a transparent and efficient framework for establishing the desired asymptotic behavior of the semigroup.

Although the operator-valued integrals appearing below are understood in the strong operator topology, all the corresponding estimates are carried out in the operator norm topology.

\medskip

Assume therefore that
\[
\sigma(A)\cap i\mathbb{R}=\varnothing.
\]
Let $R>0$, and denote by $\gamma_{+}$ and $\gamma_{-}$ respectively the left and right semicircles of the circle
\[
|z|=R.
\]
Let $\gamma'$ be a contour contained in
\[
\{z\in \rho(A):\Re z<0\},
\]
joining $i\mathbb{R}$ to $-i\mathbb{R}$, where $\rho(A)$ denotes the resolvent set of $A$.

\medskip

For $t>0$, define
\[
g_t(z)=\int_{0}^{t} e^{-sz}T(s)\,ds.
\]
Then $g_t$ is an entire operator-valued function on $\mathbb{C}$ and satisfies
\[
e^{tz}g_t(z)
=(zI-A)^{-1}\bigl(e^{tz}I-T(t)\bigr),
\qquad z\in \rho(A).
\]
Applying Cauchy's theorem, one obtains
\begin{align}
T(t)A^{-1}
&=
-\frac{1}{2\pi i}
\int_{\gamma_{+}\cup\gamma'}
\left(1+\frac{z^{2}}{R^{2}}\right)
(zI-A)^{-1}T(t)\,\frac{dz}{z}
\nonumber\\
&=
-\frac{1}{2\pi i}
\int_{\gamma_{+}}
\left(1+\frac{z^{2}}{R^{2}}\right)
(zI-A)^{-1}T(t)\,\frac{dz}{z}
\label{eq:contour}\\
&\quad
-\frac{1}{2\pi i}
\int_{\gamma'}
\left(1+\frac{z^{2}}{R^{2}}\right)
(zI-A)^{-1}e^{tz}\,\frac{dz}{z}
\nonumber\\
&\quad
-\frac{1}{2\pi i}
\int_{\gamma_{-}}
\left(1+\frac{z^{2}}{R^{2}}\right)
g_t(z)e^{tz}\,\frac{dz}{z}.
\nonumber
\end{align}

As $t\to +\infty$, the second integral in \eqref{eq:contour} converges to zero by the Dominated Convergence Theorem.

Moreover, on the contours $\gamma_{\pm}$, one has
\[
\left|1+\frac{z^{2}}{R^{2}}\right|
=\frac{2|\Re z|}{R}.
\]

On $\gamma_{+}$, using the boundedness of the semigroup, we estimate
\[
\|(zI-A)^{-1}T(t)\|
=
\left\|
\int_{0}^{+\infty} e^{-sz}T(s+t)\,ds
\right\|
\leq
\frac{M}{|\Re z|},
\]
where
\[
M:=\sup_{t\geq 0}\|T(t)\|<\infty.
\]
Similarly, on $\gamma_{-}$,
\[
\|e^{tz}g_t(z)\|
=
\left\|
\int_{0}^{t} e^{(t-s)z}T(s)\,ds
\right\|
\leq
\frac{M}{|\Re z|}.
\]
Consequently, both the first and third integrals in \eqref{eq:contour} are bounded in norm by $M/R$. Hence,
\[
\limsup_{t\to\infty}\|T(t)A^{-1}\|
\leq
\frac{2M}{R}.
\]
Since $R>0$ is arbitrary, letting $R\to + \infty$ yields
\[
\|T(t)A^{-1}\|\longrightarrow 0,
\qquad t\to + \infty.
\]

\end{proof}

\begin{proof}[Proof of Proposition \ref{ww}]
We have two cases:\\

\noindent Case 1: Assume that 
\begin{eqnarray} \label{exponential}
\|T(t)\|\longrightarrow 0,
\qquad t\to + \infty.
\end{eqnarray}
Therefore  
\begin{eqnarray*} 
\| T(t)  w_0\|_{X_1\times X}^2  \leq  \mathcal K(t) \|w_0\|_{X_1\times X}^2,\qquad \forall w_0 \in X_2\times X_1\setminus \left\{0\right\},
\end{eqnarray*}
with  
\[
\mathcal K(t) = \sup_{s\in (t,+\infty)}\| T(s)\|^2.
\]
\noindent Case 2: Let's assume now that \eqref{exponential} does not hold.
We deduce from Theorem \ref{thmstab} that 
\begin{eqnarray*} \label{stabsemiunifbis}
\| T(t) \mathcal A_d^{-1}\| \to 0,  \textrm{  as  } t \to +\infty.
\end{eqnarray*}
Hence 
\begin{eqnarray*} 
\| T(t)  w_0\|_{X_1\times X}^2  \leq  \mathcal K(t) \mu(w_0) \|w_0\|_{X_1\times X}^2,\qquad \forall w_0 \in X_2\times X_1\setminus \left\{0\right\},
\end{eqnarray*}

with  
\[
\mathcal K(t) = \sup_{s\in (t,+\infty)}\| T(s) \mathcal A_d^{-1}\|^2,
\]
which finishes the proof of the Proposition. \\

\noindent Notice that Case 2 always applies, while Case 1 is a special case of it.

\end{proof}

\end{document}